\newcommand{\CI}{\mathcal{C}^{\infty}}
\newcommand{\ep}{\epsilon}
\newcommand{\RR}{\mathbf{R}}
\DeclareMathOperator{\sgn}{sgn}
\newcommand{\phg}{\text{phg}}
\newcommand{\Lap}{\Delta}
\newcommand{\rp}{\mathcal{R}_+}
\newcommand{\rn}{\mathcal{R}_-}
\newcommand{\rpn}{\mathcal{R}_\pm}
\newcommand{\RNum}[1]{\uppercase\expandafter{\romannumeral #1\relax}}
\newcommand{\D}{\mathcal{D}}
\newcommand{\tb}{\widetilde{b}}
\newcommand{\ta}{\widetilde{a}}
\newtheorem{theorem}{Theorem}[section]
\newtheorem*{theorem*}{Theorem}
\newtheorem{corollary}{Corollary}[theorem]
\newtheorem{lemma}[theorem]{Lemma}
\newtheorem{definition}{Definition}[section]
\newtheorem{proposition}[theorem]{Proposition}
\newtheorem{remark}{Remark}[section]
\title[Polyhomogeneity, Diffraction and Scattering]{Propagation of polyhomogeneity, Diffraction and Scattering on Product Cones}
\author{Mengxuan Yang}
\date{\today}
\address{Department of Mathematics, Northwestern University}
\email{mxyang@math.northwestern.edu}
\begin{document}
\maketitle

\begin{abstract}
We consider diffraction of waves on a product cone. We first show that diffractive waves enjoy a one-step polyhomogeneous asymptotic expansion, which is an improvement of Cheeger-Taylor's classical result of half-step polyhomogeneity of diffractive waves in \cite{cheeger1982diffraction}, \cite{cheeger1982diffraction2}. We also conclude that on product cones, the scattering matrix is the diffraction coefficient, which is the principal symbol of the diffractive half wave kernel, for strictly diffractively related points on the cross section. This generalize the result of Ford, Hassell and Hillairet in 2-dimensional flat cone settings\cite{ford2018wave}. In the last section, we also give a radiation field interpretation of the relationship between the scattering matrix and the diffraction coefficient.
\end{abstract}


\section{Introduction}
In this paper, we study the diffraction coefficient of the wave equation 
$$\Box u=0$$
and the scattering matrix corresponding to the Helmholtz equation
$$\left(\Lap-\lambda^2\right)u=0$$ on a cone $C(N)$. The diffraction refers to the effect that when a propagating wave encounters a corner of an obstacle or a slit, its wave front bends around the corner of the obstacle and propagates into the geometrical shadow region. When studying the wave equation on cones, we see that its singularities likewise split into two types after they encounter the cone point. One propagates along the natural geometric extensions of the incoming ray, while other singularities emerge at the cone point and start propagating along \emph{all} outgoing directions as a spherical wave. The outgoing singularities are described to leading order by a diffraction coefficient, which is one of the central objects we study in this paper. 

The (stationary) scattering theory of the wave equation gives an approach to studying the continuous spectrum of the Laplacian on non-compact manifolds. The scattering matrix, which, intuitively speaking, maps the incoming solution at the infinity of the (stationary) wave equation to the outgoing solution, is a central object of study.   

In this paper, we focus on the diffraction and the scattering of the wave equation on cones. For notational purposes, we denote our $n$-dimensional cone by $C(N)$, which is $\mathbf{R}_+\times N^{n-1}$ with metric $dr^2+r^2h(\theta, d\theta)$ where $h(\theta, d\theta)$ is the metric on the smooth manifold $N^{n-1}$. We consider the fundamental solution to the wave equation on $C(N)$ corresponding to the Friedrichs extension of the Laplacian. For $t$ large enough, the singularities of the fundamental solutions consist of two parts by \cite{cheeger1982diffraction} \cite{cheeger1982diffraction2}. One lies on a sphere (up to reflection by the boundary of $N$) of radius $t$ to the initial point $(r',\theta')$, while the other part lies on a sphere of radius $t-r'$ around the cone point and is conormal to $\{r=t-r'\}$. We refer the latter to the \emph{diffractive wave front}, and it is the main object of our interest in this paper. The former notion will be called the \emph{geometric wave front}. See Figure \ref{pic2} in Section 2 for an example of the geometric and diffractive waves. The diffraction coefficient is therefore defined by comparing the principal symbol of the incoming wave to the principal symbol of the diffractive wave, or equivalently, reading off the principal symbol of the diffractive half wave kernel. On the other hand, the scattering matrix is defined by considering the leading order behavior to the stationary wave equation
$$\left(\Lap-\lambda^2\right)u=0$$
under certain boundary/asymptotic conditions; the solution $u$ then has the leading order behavior 
$$u\sim a_+(\theta)r^{-\frac{n-1}{2}}e^{i\lambda r}+a_-(\theta)r^{-\frac{n-1}{2}}e^{-i\lambda r}+\mathcal{O}(r^{-\frac{n+1}{2}})\ \text{ as }r\rightarrow\infty$$
where $a_+(\theta)$ is uniquely determined by $a_-(\theta)$; the scattering matrix $S(\lambda)$ is then defined by the unitary map from $a_+(\theta)$ to $a_-(\theta)$ for $\lambda\in\RR\backslash\{0\}$.
 
The following theorem thus relates two central concepts of the theories of diffraction and scattering: 
\begin{theorem}
\label{thm1.1}
Away from the intersection of the geometric wave front and diffractive wave front, the kernel of the diffractive half wave propagator is a conormal distribution of the form: 
$$
U_D(t)= (2\pi)^{-\frac{n+1}{2}}\int e^{i(r+r'-t)\cdot\lambda} K(r,\theta; r',\theta';\lambda)d\lambda |r^{n-1}drd\theta r'^{n-1}dr'd\theta'|^{\frac{1}{2}}.
$$
The principal symbol $K_0(r,r',\theta,\theta')$ of the diffractive half wave kernel $U_{D}(t,r,r',\theta,\theta')$ is related to the kernel of the scattering matrix $S(\lambda, \theta,\theta')$ by 
\begin{equation}
K_0(r, \theta, r', \theta')= (2\pi)^{-1}(rr')^{-\frac{n-1}{2}}S(\lambda, \theta,\theta'). 
\end{equation}
\end{theorem}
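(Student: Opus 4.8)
The plan is to reduce everything to an explicit spectral computation on the product cone, exploiting the dilation symmetry that makes the exact cone completely solvable by separation of variables. First I would let $\{\phi_j\}$ be an orthonormal basis of eigenfunctions of $\Lap_N$ on $N^{n-1}$ with eigenvalues $\mu_j^2$. On each mode the Helmholtz operator $\Lap-\lambda^2$ separates into a Bessel equation whose solution regular at the tip, hence in the domain of the Friedrichs extension, is $f_j(r)=r^{-(n-2)/2}J_{\nu_j}(\lambda r)$ with order $\nu_j=\sqrt{\mu_j^2+((n-2)/2)^2}$. Using the Hankel inversion formula $\int_0^\infty J_\nu(\lambda r)J_\nu(\lambda r')\lambda\,d\lambda=r^{-1}\delta(r-r')$ together with completeness of $\{\phi_j\}$, I would write the Schwartz kernel of the spectral measure of $\sqrt{\Lap}$ as
\begin{equation}
dE(\lambda)(r,\theta,r',\theta')=\sum_j \phi_j(\theta)\overline{\phi_j(\theta')}\,(rr')^{-(n-2)/2}J_{\nu_j}(\lambda r)J_{\nu_j}(\lambda r')\,\lambda\,d\lambda,
\end{equation}
so that the half wave propagator is recovered as $U(t)=\int_0^\infty e^{-it\lambda}\,dE(\lambda)$.

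The second step is to extract the diffractive front from this expression. Because the principal symbol at the front $\{t=r+r'\}$ is governed by the high-frequency regime $\lambda\to\infty$, I would insert the large-argument asymptotics $J_\nu(z)\sim\sqrt{2/(\pi z)}\,\cos(z-\nu\pi/2-\pi/4)$ and expand the product of cosines via $\cos A\cos B=\tfrac12[\cos(A-B)+\cos(A+B)]$. The difference term $\cos(\lambda(r-r'))$ is independent of $\nu_j$ and, after summation over $j$, reconstructs the direct geometric wave concentrated at $t=|r-r'|$; the sum term $\cos(\lambda(r+r')-\nu_j\pi-\pi/2)$ carries all the mode dependence and, after the $e^{-it\lambda}$ factor and $\lambda$-integration, produces exactly the conormal singularity at the diffractive front $t=r+r'$. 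The power $(rr')^{-(n-2)/2}\cdot(rr')^{-1/2}=(rr')^{-(n-1)/2}$ emerges automatically, and reading off the coefficient of $e^{i\lambda(r+r'-t)}$ gives an amplitude whose leading term is
\begin{equation}
K_0(r,\theta,r',\theta')=c_n\,(rr')^{-(n-1)/2}\sum_j \phi_j(\theta)\overline{\phi_j(\theta')}\,e^{-i\pi\nu_j-i\pi/2}
\end{equation}
for an explicit normalization constant $c_n$.

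Third, I would compute the scattering matrix independently from the same Bessel asymptotics. Reading off the coefficients of $r^{-(n-1)/2}e^{\pm i\lambda r}$ in $f_j(r)$ shows that $S(\lambda)$ is diagonal in the basis $\{\phi_j\}$ with eigenvalue $e^{-i\pi\nu_j-i\pi/2}$ in the convention fixed by the stated asymptotics, so its kernel is precisely $S(\lambda,\theta,\theta')=\sum_j\phi_j(\theta)\overline{\phi_j(\theta')}\,e^{-i\pi\nu_j-i\pi/2}$; note this is independent of $\lambda$, as forced by the dilation invariance of the cone. Comparing the two mode sums then yields $K_0=c_n\,(rr')^{-(n-1)/2}S(\lambda,\theta,\theta')$, and the claimed identity reduces to verifying $c_n=(2\pi)^{-1}$ against the half-density normalization $(2\pi)^{-(n+1)/2}$ and the density factor $|r^{n-1}dr\,d\theta\,r'^{n-1}dr'\,d\theta'|^{1/2}$ appearing in the stated form of $U_D(t)$.

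I expect the main obstacle to be twofold, and largely a matter of bookkeeping and rigor rather than of concept. First, pinning the constant to the exact value $(2\pi)^{-1}$ demands careful tracking of several competing conventions: the Fourier normalization in the $t\leftrightarrow\lambda$ transform, the $(2\pi)^{-(n+1)/2}$ half-density prefactor, and the $e^{-i\pi/2}$ phase and Maslov-type contributions generated by the Bessel asymptotics. Second, and more seriously, I must justify that the leading Bessel asymptotics genuinely compute the principal symbol: this requires controlling the sum over $j$ as a distribution on $N\times N$ and showing that the subleading terms in the expansion, uniformly in $j$, contribute only lower-order conormal terms. This is exactly where the hypothesis that $\theta,\theta'$ are strictly diffractively related enters — it guarantees the diffractive and geometric fronts are disjoint, so that $U_D$ can be cleanly isolated, and it places $(\theta,\theta')$ in the region where the mode sum defining the diffraction coefficient converges to a smooth kernel.
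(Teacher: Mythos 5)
Your overall strategy is the same as the paper's: decompose into modes on $N$, write the half-wave kernel via the Cheeger--Taylor functional calculus $(rr')^{\alpha}\int_0^\infty e^{-it\lambda}J_{\nu}(\lambda r)J_{\nu}(\lambda r')\lambda\,d\lambda$ with $\alpha=-(n-2)/2$, extract the diffractive front from the "sum" term $e^{i\lambda(r+r')-i\pi\nu_j-i\pi/2}$ in the product of Hankel asymptotics, and independently compute $S(\lambda)=-ie^{-i\pi\nu}$ from the same Bessel data by imposing the Friedrichs condition at $r=0$ and reading off the $r^{-(n-1)/2}e^{\pm i\lambda r}$ coefficients at infinity. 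Your amplitude $e^{-i\pi\nu_j-i\pi/2}$ and the factor $(rr')^{-(n-1)/2}$ match the paper's $K_0=-\tfrac{i}{2\pi}(rr')^{-(n-1)/2}e^{-i\nu\pi}$, and your identification of the difference term with the geometric front is exactly how the paper discards the singularities at $N^*\{r=r'+t\}$.

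The genuine gap is the step you flag as ``more serious'' and then leave unresolved: justifying that the termwise leading Bessel asymptotics compute the principal symbol of the \emph{sum} over $j$. Your proposed mechanism --- controlling the subleading terms of the large-argument expansion ``uniformly in $j$'' --- does not work as stated, because the expansion $J_\nu(z)\sim\sqrt{2/(\pi z)}\bigl(\cos\omega\sum_k(-1)^k a_{2k}(\nu)z^{-2k}-\cdots\bigr)$ is not uniform in $\nu$: the coefficients $a_k(\nu)$ grow polynomially in $\nu$ and the expansion degenerates in the transition region $z\lesssim\nu^2$, which is always reached by infinitely many modes at any fixed $(r,r',\lambda)$. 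The paper's resolution is the content of its Section 3 together with Theorem 3.9 (``Principal Symbols of Diffraction''): one first regularizes the angular delta function by a smooth $\varphi$ with rapidly decaying Fourier coefficients $c_{\nu_j}$, then invokes the \emph{quantitative} propagation of conormality and of leading-order polyhomogeneity (Propositions \ref{proposition:conormal} and \ref{proposition:phg}) to bound every symbol seminorm of the remainder $\sigma^T_j-\sigma^P_j$ of the $j$-th diffracted mode by seminorms of the incoming data at $t<0$, which grow only polynomially in $\nu_j$; the rapid decay of $c_{\nu_j}$ then forces the series of remainders to converge in $S^{m-1+\ep}$, so the subprincipal parts cannot pollute the principal symbol, and only afterwards does one let $\varphi\to\delta_{\theta'}$. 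Without this (or an equivalent uniform estimate obtained, say, from uniform-in-order Bessel asymptotics), the identity $K_0=\sum_j\sigma^P_j\varphi_j(\theta)\varphi_j(\theta')$ is unjustified --- the paper's own footnote gives the cautionary example $\delta=\sum_k e^{ik\theta}$, a sum of order-$0$ conormal distributions each with vanishing principal symbol. The remaining items you list (tracking the constant $(2\pi)^{-1}$ against the half-density normalization, and smoothness of $\sum_j\varphi_j(\theta)\varphi_j(\theta')e^{-i\pi\nu_j}$ away from $d_h(\theta,\theta')=\pi$) are indeed routine and handled the same way in the paper.
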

It is worth while to point out here that we are actually showing that the smooth part of the scattering matrix corresponds to the diffraction coefficient, while the singular part of the scattering matrix corresponds to the geometric wave. This was proved in a special case of 2-dimensional flat cones by Ford, Hassell and Hillairet\cite{ford2018wave}.

We also give a finer description of the structure of the diffractive wave by showing that it is \emph{one-step} polyhomogeneous thus improving the result of half-step polyhomogeneity of diffractive waves that appears in Cheeger-Taylor \cite{cheeger1982diffraction} \cite[Theorem 5.1, 5.3]{cheeger1982diffraction2}. Consequently, one half of the coefficients that appear in Cheeger-Taylor's expansion must vanish.
\begin{theorem}
\label{thm1.2}
The symbol $K(r,r',\theta,\theta';\lambda)$ of the diffractive half wave kernel $U_{D}(t)$ is one-step polyhomogeneous in $\lambda$ for $\lambda>0$, i.e. 
\begin{equation}
K(r,r',\theta,\theta';\lambda)\sim\sum_{i=0}^{-\infty} K_i(r,r',\theta,\theta')\lambda^{i}. 
\end{equation}
\end{theorem}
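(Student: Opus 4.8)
The plan is to start from the spectral (Bessel) representation of the Friedrichs half-wave propagator, extract the diffractive symbol explicitly, and then analyse its large-$\lambda$ behaviour. First I would separate variables using an orthonormal basis $\{\phi_j\}$ of eigenfunctions of the cross-sectional Laplacian $\Lap_N$, with eigenvalues $\lambda_j$, and set $\nu_j=\sqrt{\lambda_j+(\tfrac{n-2}{2})^2}$, so that
\[
e^{-it\sqrt{\Lap}}(r,\theta,r',\theta')=\int_0^\infty e^{-it\lambda}\sum_j \phi_j(\theta)\overline{\phi_j(\theta')}\,(rr')^{-\frac{n-2}{2}}J_{\nu_j}(\lambda r)J_{\nu_j}(\lambda r')\,\lambda\,d\lambda .
\]
The product of cosines in the leading Bessel asymptotics splits into a phase $\lambda(r-r')$, which feeds the geometric front, and a phase $\lambda(r+r')$, which feeds the diffractive front; isolating the latter (and discarding the $e^{-i\lambda(r+r'+t)}$ piece, which is nonsingular for $t>0$) reproduces the conormal form of Theorem~\ref{thm1.1} and identifies the symbol $K(r,r',\theta,\theta';\lambda)$. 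This reduces the statement to computing the asymptotic expansion of $K$ as $\lambda\to\infty$.

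Second, I would feed the full large-argument asymptotic series of $J_\nu(z)$ into the product $J_{\nu_j}(\lambda r)J_{\nu_j}(\lambda r')$. The two prefactors $(\lambda r)^{-1/2}(\lambda r')^{-1/2}$ multiply to the integer power $\lambda^{-1}(rr')^{-1/2}$, which cancels the measure factor $\lambda$, and the remaining Poincaré series runs in integer powers of $(\lambda r)^{-1}$ and $(\lambda r')^{-1}$. The coefficient of each power is a polynomial in $\nu_j^2$, so after summing in $j$ it becomes the Schwartz kernel of a polynomial in $\Lap_N$ composed with $e^{-i\pi\nu}$, where $\nu=\sqrt{\Lap_N+(\tfrac{n-2}{2})^2}$; here $e^{-i\pi\nu}$ is the cross-sectional half-wave propagator evaluated at the diffraction time $\pi$, which is the source of the diffraction coefficient and the link to $S(\lambda,\theta,\theta')$ in Theorem~\ref{thm1.1}. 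Term by term this manifestly produces an expansion of $K$ in \emph{integer} powers of $\lambda$.

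The main obstacle is that this term-by-term computation is not a priori legitimate: since $\nu_j\to\infty$ the fixed-order Bessel asymptotics cannot be inserted uniformly under the sum, and this is precisely the gap that forces Cheeger--Taylor to claim only half-step polyhomogeneity. The heart of the proof is therefore to show that the genuine asymptotic expansion of the symbol coincides with the naive one, i.e. that no half-integer powers are generated by the interchange of summation and expansion. I would close this gap in two complementary ways: (i) replace the fixed-order expansion by a representation uniform in both order and argument --- either Olver's uniform Bessel asymptotics or a Sommerfeld--Watson contour representation of the sum over $j$ --- so that the remainder after $N$ terms can be estimated as a symbol of the expected integer order; and (ii) exploit the exact dilation invariance of the metric cone, under which $\Lap\mapsto s^{-2}\Lap$ and hence $U_D(st,sr,\theta,sr',\theta')=s^{-w}U_D(t,r,\theta,r',\theta')$ for the appropriate half-density weight $w$. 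This homogeneity forces $K$ to depend on $\lambda$ only through the scale-invariant combinations $\lambda r$ and $\lambda r'$, so that the admissible powers of $\lambda$ are tied to those of $(\lambda r)^{-1}(\lambda r')^{-1}$; combined with the definite parity of the two Bessel asymptotic series this rules out half-integer steps.

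Finally, I would reconcile the two and record the conclusion. Matching the leading coefficient against the uniform-asymptotic computation gives $K_0$ and, via Theorem~\ref{thm1.1}, the scattering-matrix identification; the integer-step structure of the remaining coefficients $K_i$ then yields the one-step polyhomogeneity of Theorem~\ref{thm1.2}, and in particular shows that the half-integer coefficients in Cheeger--Taylor's expansion must vanish. I expect step (i), the uniform control of the eigenfunction sum, to be the genuinely hard and technical part, with the scale-invariance argument of (ii) supplying the conceptual reason that the cancellation must occur.
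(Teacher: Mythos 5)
Your route is fundamentally different from the paper's, and as written it has a genuine gap exactly at the point you flag as ``the genuinely hard part.'' Everything up to the term-by-term insertion of the fixed-order Bessel asymptotics is the classical Cheeger--Taylor computation, and the whole content of the theorem is the justification of the interchange of the mode sum with the asymptotic expansion. Your proposed fix (ii) does not close it: dilation invariance of the cone only forces $K$ to be, up to an overall homogeneity, a function of $\lambda r$, $\lambda r'$, $r/r'$, $\theta$, $\theta'$, and this is perfectly compatible with an expansion in half-integer powers of $\lambda r$ and $\lambda r'$; the ``definite parity of the Bessel series'' you then invoke is precisely the term-by-term statement whose validity is in question, so the argument is circular. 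The quantitative obstruction is that the remainder after $N$ terms of the large-argument expansion of $J_{\nu}(z)$ carries a constant growing polynomially in $\nu$ (roughly like $\nu^{2N}$), while the mode weights $\varphi_j(\theta)\varphi_j(\theta')$ in the kernel do not decay in $j$ at all, so the summed remainder is not a symbol of the expected order and nothing is ``manifest.'' Your fix (i) --- Olver uniform asymptotics or a Sommerfeld--Watson representation, with control of the transition region $\nu_j\sim\lambda r$ --- is a plausible but substantial program that you have not carried out, and it is not clear it yields integer steps rather than the half-steps Cheeger--Taylor obtain.

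The paper avoids this entirely by a soft propagation argument. It characterizes one-step polyhomogeneity of a conormal solution intrinsically, via iterated application of the shifted radial test operators $(R-i\alpha_j)$ with $R=tD_t+rD_r$ and $\alpha_j=m+\tfrac{n+1}{2}-j+1$ (Lemmas \ref{lemma:phg} and \ref{lemma:fullphg}): gaining one full order of iterative regularity at each application is equivalent to peeling off one integer-homogeneous term of the symbol. The exact commutator identity $[\Box,R]=-2i\Box$ on the product cone makes each $\prod_j(R-i\alpha_j)u$ again a solution, so Melrose--Wunsch propagation of conormality transports these regularity gains through the cone point from the (polyhomogeneous) incoming wave to the diffracted wave, giving Corollary \ref{propa_phg}. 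The only place the mode sum is confronted is in identifying the principal symbol (Theorem \ref{Diffraction_Symbol}), and there the paper first mollifies in $\theta'$ so the coefficients $c_{\nu_j}$ are rapidly decreasing, and controls the remainders by the \emph{effective estimates} of Proposition \ref{proposition:conormal} rather than by uniform Bessel asymptotics. If you want to salvage your approach, you would either need to import that commutator characterization (at which point you have reproduced the paper's proof), or genuinely execute the uniform-in-$(\nu,z)$ remainder analysis, including the Airy transition region, which is a different and harder paper.
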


Finally, we give an interpretation of the relation in Theorem \ref{thm1.1} in terms of the radiation field. The radiation field was introduced by Friedlander\cite{friedlander2001notes} for smooth asymptotically Euclidean manifolds. It intuitively can be regarded as measuring the waves of different time delay that arrive at infinity. We define the forward radiation field $\mathcal{R}_+$ as the limit, as time goes to infinity, of the derivative of the forward fundamental solution of the wave equation along certain light rays. By reversing time one can define the backward radiation field $\mathcal{R}_-$. The forward/backward radiation field $\mathcal{R}_{\pm}$ is related to the scattering matrix $S(\lambda)$ in the following formula: 
\begin{equation}
S(\lambda)=\mathcal{F}\circ\mathcal{R}_+\circ \mathcal{R}_-^{-1}\circ\mathcal{F}^{-1}.
\end{equation}
This was first introduced by Friedlander \cite{friedlander1980radiation} in $\RR^n$, and was proved later by S{\'a} Barreto\cite{sa2003radiation} for smooth asymptotically Euclidean manifolds. The intuition and motivation of Theorem \ref{thm1.1} also come from the following facts: In principle, the scattering operator:
$$\mathscr{S}:=\mathcal{R}_+\circ \mathcal{R}_-^{-1}$$
is given by the Fourier conjugation of the leading symbol to the forward fundamental solutions \cite{friedlander2001notes}; the scattering matrix is the Fourier conjugation of the scattering operator \cite{sa2003radiation}. This suggests that the scattering matrix and the diffraction coefficient should be the same up to some constant or scaling in radial variables.

We combine Cheeger-Taylor's functional calculus on cones and Melrose-Wunsch's propagation of conormality to give a simpler calculation of the diffractive coefficient and the one-step polyhomogeneity. As for determining the scattering matrix, we consider it mode-by-mode to reduce the original equation to a Bessel equation. 

The outline of this paper is as follows. In section 2, we prove a characterization of the one-step polyhomogeneous solutions to wave equations on cones and the propagation of one-step polyhomogeneity for the diffractive wave. These will be used in section 3 to determine the diffraction coefficient. Then in section 3, we compute the diffraction coefficient using the functional calculus on cones and the propagation of conormality. In section 4, we focus on computing the scattering matrix and give the diffraction-scattering relation, Theorem \ref{thm1.1}, in the end. Finally in section 5, we give an interpretation of this result using the radiation field.

\subsection*{Acknowledgment} The author wants to thank Jared Wunsch for proposing this interesting topic and genuinely thank him for his guidance and many helpful discussions as the author's advisor. The author also wants to thank Dean Baskin, Jeremy Marzuola and Ant{\^o}nio S{\'a} Barreto for helpful discussions about the radiation field during the author's staying in MSRI.

\section{Conic Diffraction Geometry}

In this section we recall some basic notions on the geometry of product cones together with the geometric and diffractive wave on it.  

Again, we denote the product cone with total dimension $n$ by $C(N)\cong\mathbf{R}_+\times N^{n-1}$ with metric $dr^2+r^2h(\theta,d\theta)$, where $N$ is a smooth manifold and $h$ is the metric restricted to $N$. This is a particular case of general cones $\left(C(Y),\tilde{g}\right)$ with metric $\tilde{g}=dr^2+r^2\tilde{h}(r,dr,\theta,d\theta)$, where in our case the metric $h$ does not have $r$ and $dr$ dependence. Sometimes we also use $X$ to denote the product cone for simplicity when the cross section $N$ is not involved explicitly in the discussion. Without loss of generality we assume $N$ has one connected component since otherwise we can restrict to a single component. 

The Laplacian\footnote{We use the positive Laplacian throughout this paper.} on $C(N)$ is defined as 
$$\Lap=D_r^2 -i\frac{n-1}r D_r+\frac{\Delta_\theta}{r^2}, $$
where $D_r:=\frac{1}{i}\frac{\partial}{\partial r}$ is the Fourier normalization of $r$-derivative and $\Lap_{\theta}$ is the Laplacian on $N$. Here we set $\Lap$ to be the Friedrichs extension of the Laplacian acting on the domains $\CI_c(\mathring{X})$. The Friedrichs domain is defined as 
$$\mathcal{D}:=\text{Dom}(\Lap_{Fr})=\text{cl} \left\{ u\in \CI_c(X^{\mathrm{o}}): ||du||_{L^2_g(X)}+||u||_{L^2_g(X)}<\infty \right\}, $$
and $\mathcal{D}_s$ denotes the corresponding domain of $\Lap^{s/2}$. Later in this paper, we will use $L^2(\RR_t;\D_s)$ to denote the regularity on spacetime $\RR\times X$. The following proposition from \cite[Proposition 3.1]{Melrose-Wunsch1} gives a characterization of domains of Friedrichs extension: 

\begin{proposition}
\emph{(Domains of the Friedrichs extension)}
For $n>4$, 
$$\emph{Dom}(\Lap)=\left\{u\in r^{w}L^2_b(X); \Lap u\in L^2_g(X) \right\}$$
is independent of $w$ in the range $w\in(-n+2,-n/2+2)$, where $L^2_b(X)$ is the boundary weighted $L^2$-space with $r^{-\frac{n}{2}}L^2_b(X)=L^2_g(X)$. For $n=3$, the same is true for $w\in(-1,0)$. For $n=2$, 
$$\emph{Dom}(\Lap)=\left\{u\in L^2_g(X); u=c+u', c\in\mathbb{C}, u'\in r^{w}L^2_b(X),  \Lap u \in L^2_g(X) \right\}.$$
In all cases, $\Lap$ is an unbounded operator:
$$\Lap: \emph{Dom}(\Lap)\rightarrow L^2_g(X)$$
and $\emph{Dom}(\Lap)$ coincides with the domain of the Friedrichs extension. 
\end{proposition}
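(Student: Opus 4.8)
The plan is to reduce the problem to a modewise analysis in the radial variable, using separation of variables together with the Mellin transform, which is the tool adapted to the boundary-weighted spaces $r^w L^2_b(X)$ of the b-calculus. First I would expand $u$ in eigenfunctions $\varphi_j$ of the cross-sectional Laplacian, $\Lap_\theta \varphi_j = \mu_j^2 \varphi_j$ (with $\mu_0 = 0$ since $N$ is connected), writing $u(r,\theta)=\sum_j u_j(r)\varphi_j(\theta)$. On the $j$-th mode the operator becomes
$$L_j = D_r^2 - i\frac{n-1}{r}D_r + \frac{\mu_j^2}{r^2},$$
whose indicial equation $L_j r^\alpha = 0$ has roots
$$\alpha_\pm^{(j)} = -\frac{n-2}{2}\pm\nu_j, \qquad \nu_j = \sqrt{\mu_j^2 + \left(\tfrac{n-2}{2}\right)^2}.$$
Since $L^2_b(X)$ carries the measure $\tfrac{dr}{r}\,d\theta$ and $r^{-n/2}L^2_b(X)=L^2_g(X)$, membership $r^\alpha\in r^wL^2_b$ near the tip is governed by the single inequality $\alpha>w$. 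This dictionary between indicial roots and weights is what the whole argument rests on.

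Next I would identify which solution branches belong to the Friedrichs domain. The Friedrichs extension is attached to the closure of the form $u\mapsto\|du\|^2_{L^2_g}$, so I would characterize its domain through the form domain $\D_1$. A homogeneous solution $r^{\alpha_\pm^{(j)}}$ lies in $\D_1$ precisely when $du\in L^2_g$, i.e. when $\alpha>-\tfrac{n-2}{2}$; hence for each mode the subdominant root $\alpha_+^{(j)}$ is always admissible while the dominant root $\alpha_-^{(j)}$ is excluded, failing form-domain integrability at the tip. This selects, modewise, exactly the ``$+$'' branch, and standard extension theory identifies the resulting space with $\text{Dom}(\Lap_{Fr})$.

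I would then establish the weighted characterization and its independence of $w$ by an inhomogeneous analysis. Given $\Lap u = f\in L^2_g = r^{-n/2}L^2_b$, inverting $L_j$ modewise (the Mellin transform has poles exactly at the indicial roots) produces a particular solution that gains two orders of decay and so lies in $r^{(4-n)/2}L^2_b$, the remaining freedom being a combination of $r^{\alpha_\pm^{(j)}}$. The constraint $u\in r^wL^2_b$ must then do two things at once: retain the genuine domain elements, which forces $r^{(4-n)/2}L^2_b\subset r^wL^2_b$, i.e. $w\le(4-n)/2=-n/2+2$; and kill the non-Friedrichs modes $r^{\alpha_-^{(j)}}$, the most dangerous being $\alpha_-^{(0)}=-(n-2)$, which forces $w>-(n-2)=-n+2$. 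For every $w$ in the resulting interval the admitted behaviors are identical — all ``$+$'' modes plus the particular solution — which is exactly the asserted $w$-independence, and the mapping property $\Lap:\text{Dom}(\Lap)\to L^2_g$ follows from the two-order gain.

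Finally I would treat the low-dimensional degenerations, where the main difficulty lies. For $n=3$ the particular-solution threshold $(4-n)/2=\tfrac12$ is no longer binding: the lowest subdominant root is $\alpha_+^{(0)}=0$, and keeping this constant-type mode inside $r^wL^2_b$ requires $w<0$, sharpening the interval to $(-1,0)$; the crossover at $n=4$, where $(4-n)/2$ meets $\alpha_+^{(0)}=0$, is exactly why that borderline dimension is separated out. For $n=2$ the two indicial roots of the lowest mode coincide at $0$, so the second solution degenerates to $\log r$; the constant lies in the form domain while $\log r$ does not, and since a nonzero constant cannot be absorbed into $r^wL^2_b$ for the relevant $w$, it must be split off, giving $u=c+u'$ with $u'\in r^wL^2_b$. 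The crux, and the step I expect to be most delicate, is precisely this matching: proving that the form-domain selection of indicial roots coincides with the weighted-space condition uniformly in $w$, and handling the borderline and coincident roots in low dimensions where logarithms appear and the constant mode sits exactly at the integrability threshold.
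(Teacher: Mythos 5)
The paper does not prove this proposition; it is quoted verbatim from Melrose--Wunsch (\cite[Proposition 3.1]{Melrose-Wunsch1}), so there is no in-paper argument to compare against. Your sketch follows the standard route used there: separation of variables, the Mellin transform on $r^wL^2_b$, the dictionary between indicial roots $\alpha_\pm^{(j)}=-\tfrac{n-2}{2}\pm\nu_j$ and admissible weights, selection of the $+$ branch by the form domain, and the degenerations at $n=3$ (the constant mode $\alpha_+^{(0)}=0$ becomes the binding constraint) and $n=2$ (coincident roots, $\log r$ excluded, constant split off). This is correct in outline. The one point you should not gloss over is uniformity in the mode index $j$: the modewise inversion must be summed, and this requires that the inverse indicial family be uniformly bounded on a weight line avoiding all indicial roots (which holds because $\nu_j\to\infty$, so the roots only escape to $\pm\infty$); this, together with keeping the weight strictly inside the interval so that no indicial root is hit (whence the open endpoints), is where the b-calculus machinery does real work rather than being mere bookkeeping.
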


For a more detailed discussion we refer to \cite{Melrose-Wunsch1} and \cite{melrose2008propagation}, though it is worth pointing out the following corollary to the proposition: 

\begin{corollary}
If $u\in\mathcal{E'}(X^{\mathrm{o}})$, i.e., a compactly supported distribution in the interior of the cone $X$, then $u\in\mathcal{D}_s$ is equivalent to $u\in H^s(X)$.
\end{corollary}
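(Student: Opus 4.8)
The plan is to exploit the fact that the only discrepancy between the Friedrichs scale $\mathcal{D}_s$ and the ordinary Sobolev scale $H^s(X)$ is produced by the behavior at the cone tip $r=0$ and at infinity $r=\infty$: away from these two ends the metric $dr^2+r^2h$ is smooth and nondegenerate, so $\Lap$ is a genuine second order elliptic differential operator with smooth coefficients. Since a distribution $u\in\mathcal{E}'(X^{\mathrm{o}})$ is supported in a compact set on which $c_1\le r\le c_2$ for some $0<c_1<c_2<\infty$, both ends are invisible to $u$, and the statement should reduce to interior elliptic regularity. First I would record the trivial but crucial observation that for such $u$ the weight condition defining $\text{Dom}(\Lap)$ in the Proposition is automatic: multiplication by any power of $r$ preserves $L^2_b$ on a compact subset of $X^{\mathrm{o}}$, so $u\in r^wL^2_b(X)$ for every $w$ (and in the case $n=2$ the constant $c$ in the decomposition vanishes because $u\equiv 0$ near $r=0$).

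The base case is $s=2$. By the Proposition, $u\in\mathcal{D}_2=\text{Dom}(\Lap)$ is equivalent, for interior compactly supported $u$, to the single condition $\Lap u\in L^2_g(X)$. If $u\in H^2(X)$ then $\Lap u\in L^2_g(X)$ because $\Lap$ has order two with coefficients smooth on $\operatorname{supp}u$; conversely, if $\Lap u\in L^2_g(X)$ then interior elliptic regularity gives $u\in H^2_{\mathrm{loc}}(X^{\mathrm{o}})$, which is upgraded to $u\in H^2(X)$ by compact support. Iterating gives all even integers: using $\text{Dom}(\Lap^k)=\{v\in\text{Dom}(\Lap^{k-1}):\Lap^{k-1}v\in\text{Dom}(\Lap)\}$ together with the fact that $\Lap^{k-1}u$ is again compactly supported in $X^{\mathrm{o}}$ (as $\Lap$ is local in the interior), induction and the order-$2(k-1)$ ellipticity of $\Lap^{k-1}$ yield $u\in\mathcal{D}_{2k}\Leftrightarrow u\in H^{2k}(X)$.

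To reach arbitrary real $s$ I would interpolate the scale $\{\mathcal{D}_s\}$ — a complex interpolation scale because it is generated by the self-adjoint operator $\Lap$ through its functional calculus — against the classical scale $\{H^s(X)\}$, using duality with respect to the $L^2_g$ pairing to pass to negative orders. The real content, and what I expect to be the main obstacle, is that interpolation and duality do not manifestly respect the constraint $\operatorname{supp}u\subset X^{\mathrm{o}}$. The cleanest way to remove this obstacle, which also dispatches fractional $s$ directly, is to show that $(1+\Lap)^{s/2}$ is a classical pseudodifferential operator of order $s$ in the interior: concretely, for $\chi,\psi\in\CI_c(X^{\mathrm{o}})$ the operator $\psi\,(1+\Lap)^{s/2}\chi$ is an order-$s$ $\Psi$DO, elliptic wherever $\chi=\psi=1$. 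This follows from the Cheeger--Taylor functional calculus on the cone, since the interior resolvent is pseudodifferential and the conic structure only affects the kernel near the two ends. Granting this, choosing $\chi=1$ near $\operatorname{supp}u$ and $\psi=1$ near $\operatorname{supp}\chi$, a microlocal parametrix for $\psi(1+\Lap)^{s/2}\chi$ converts $u\in\mathcal{D}_s$ (i.e. $(1+\Lap)^{s/2}u\in L^2_g$) into $u\in H^s(X)$ and back, the only remaining check being that the off-diagonal tail $(1-\psi)(1+\Lap)^{s/2}\chi$ maps into $L^2_g(X)$ globally, which is guaranteed by the far-field decay of the functional-calculus kernels on the cone.
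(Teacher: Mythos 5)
The paper itself offers no proof of this corollary; it is stated as a consequence of the preceding proposition with a pointer to \cite{Melrose-Wunsch1} and \cite{melrose2008propagation}, so there is no internal argument to compare against. Your proposal is sound and is essentially the standard argument found in those references. The base case and the even-integer induction are exactly right: on a compact subset of $X^{\mathrm{o}}$ all the weighted spaces $r^{w}L^2_b$ collapse to $L^2_g$, so membership in $\mathrm{Dom}(\Lap^k)$ reduces to $\Lap^j u\in L^2_g$ for $j\le k$, which interior elliptic regularity converts into $H^{2k}$ and back, using that $\Lap$ is local so $\Lap^{j}u$ stays compactly supported in $X^{\mathrm{o}}$. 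You also correctly locate where the real work is: fractional and negative $s$ cannot be handled by naive interpolation or $L^2_g$-duality, since neither respects the support condition, and the duality step already presupposes that multiplication by $\chi\in\CI_c(X^{\mathrm{o}})$ is bounded on $\mathcal{D}_s$ --- which is part of what is being proved. Your fix, showing that $\psi(1+\Lap)^{s/2}\chi$ is an elliptic pseudodifferential operator of order $s$ in the interior with an $L^2_g$-bounded off-diagonal remainder, is the right one, but note two points that still need to be supplied rather than asserted: (i) the interior pseudodifferentiality of complex powers of the Friedrichs extension on a cone is a substantive input (it does follow from the Cheeger--Taylor functional calculus or from Seeley-type resolvent arguments, but it deserves a citation or a proof); and (ii) for negative $s$ one must first know that $u$ lies in some $\mathcal{D}_{s_0}$ at all before applying $(1+\Lap)^{s/2}$ to it, which you obtain by pairing the finite-order compactly supported distribution $u$ against $\mathcal{D}_{-s_0}\hookrightarrow H^{-s_0}_{\mathrm{loc}}(X^{\mathrm{o}})$ for $s_0\ll 0$, i.e.\ by anchoring the negative even-integer case through duality with test functions first. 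With those two gaps filled, the argument is complete.
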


The d'Alembertian acting on the spacetime $\RR\times X$ is
$$\Box=D_t^2-\Lap$$
and we also define the \emph{half-wave propagator} $U(t)$ as 
$$U(t):= e^{-it\sqrt{\Lap}}.$$

We now consider the diffraction of waves with respect to the cone point, which has been studied in detail by Cheeger and Taylor in \cite{cheeger1982diffraction} and \cite{cheeger1982diffraction2} for product cones. There are two different notions of geodesics on cones, one more restrictive than the other. We can see that these notions on product cones are special cases of \cite{Melrose-Wunsch1} on general cones with metric $dr^2+r^2h(r,\theta,d\theta)$ within a small neighborhood of the cone point. 

\begin{definition} Suppose $\gamma: (-\epsilon, +\epsilon)\rightarrow C(N)$ is a piecewise \emph{geodesic} on the cone $C(N)$ hitting the cone point only at time $t=0$, then: 
\begin{itemize}
\item The curve $\gamma$ is a \emph{diffractive geodesic} if the intermediate terminal point $\gamma(0_-)$ and the initial point $\gamma(0_+)$ lie on the boundary $\{0\}\times N$.
\item The curve $\gamma$ is a \emph{geometric geodesic} if it is a diffractive geodesic such that the intermediate terminal point $\gamma(0_-)$ and the initial point $\gamma(0_+)$ are connected by a geodesic of length $\pi$ on the boundary $\{0\}\times N$(with respect to the boundary metric $h$).
\item The curve $\gamma$ is a \emph{strictly diffractive geodesic} if it is a diffractive geodesic but not geometric geodesic. 
\end{itemize}
\end{definition}
As pointed out in \cite{Melrose-Wunsch1}, the geometric geodesics are those that are locally realizable as limits of families of ordinary geodesics in the interior $X^{\mathrm{o}}$. Figure \ref{pic1} gives geometric and diffractive geodesics at the cone point. 

\begin{figure}[H]
  \includegraphics[width=\linewidth]{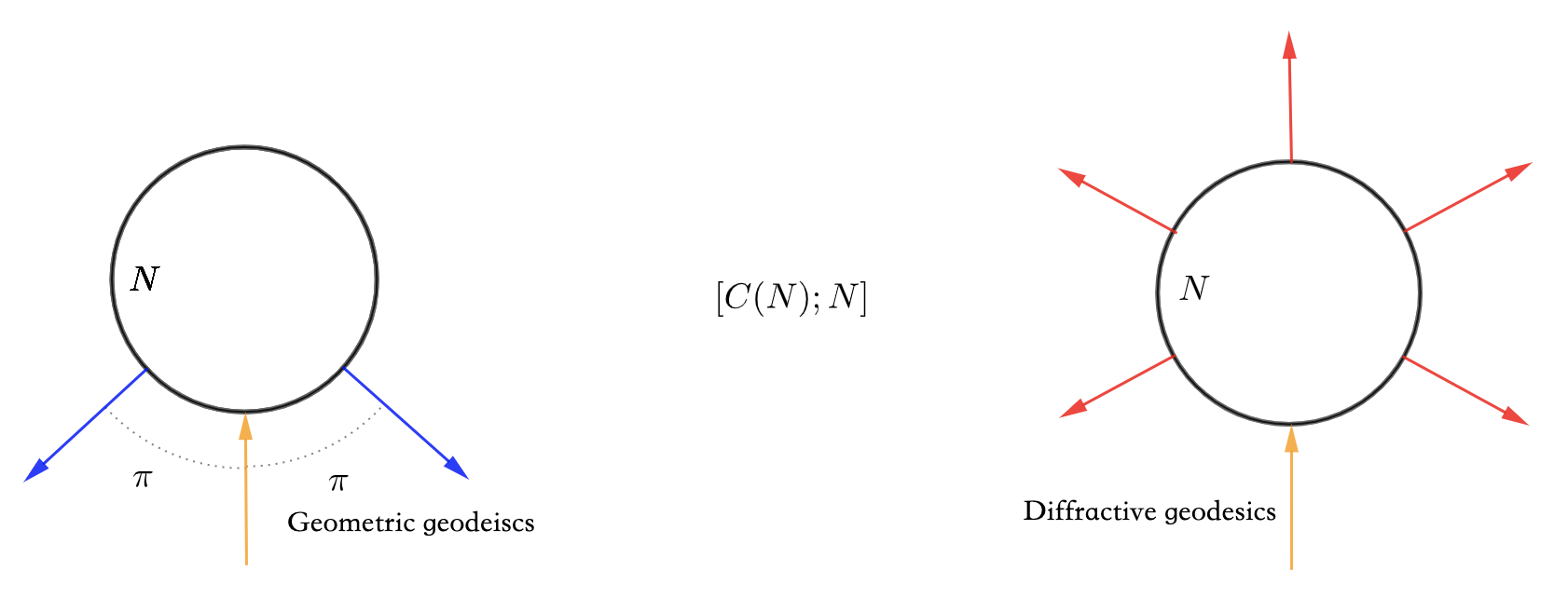}
  \caption{Diffractive and geometric geodesics}
  \medskip
  \small
  The blow up pictures of cone $C(N)$ at cone point $\{0\}\times N$:  $[C(N);N]$. On the right are the diffractive geodesics, while on the left are geometric geodesics and they are connected by geodesics in the boundary with length $\pi$.
  \label{pic1}
\end{figure}

In this paper, we focus on the diffraction coefficient and the scattering matrix away from the points that are related by the geometric geodesics, i.e. we consider the pair $(r,\theta)$ and $(r', \theta')$ with $d_{h}(\theta, \theta')\neq \pi$ for the study of the diffraction coefficient and the smooth part of scattering matrix. At the intersection of the geometric and diffractive fronts, the structure of the singularities is more complicated. This can be seen intuitively from the following picture of diffraction by a slit in Figure \ref{pic2}, which is equivalent to a product cone of angle $4\pi$. In the case of 2-dimensional flat cone, the wave kernel close to the intersection is then a singular Fourier integral operator in a calculus associated to two intersecting Lagrangian submanifolds. We refer to \cite{ford2018wave} for a detailed discussion. 

\begin{figure}[H]
  \includegraphics[width=0.7\linewidth]{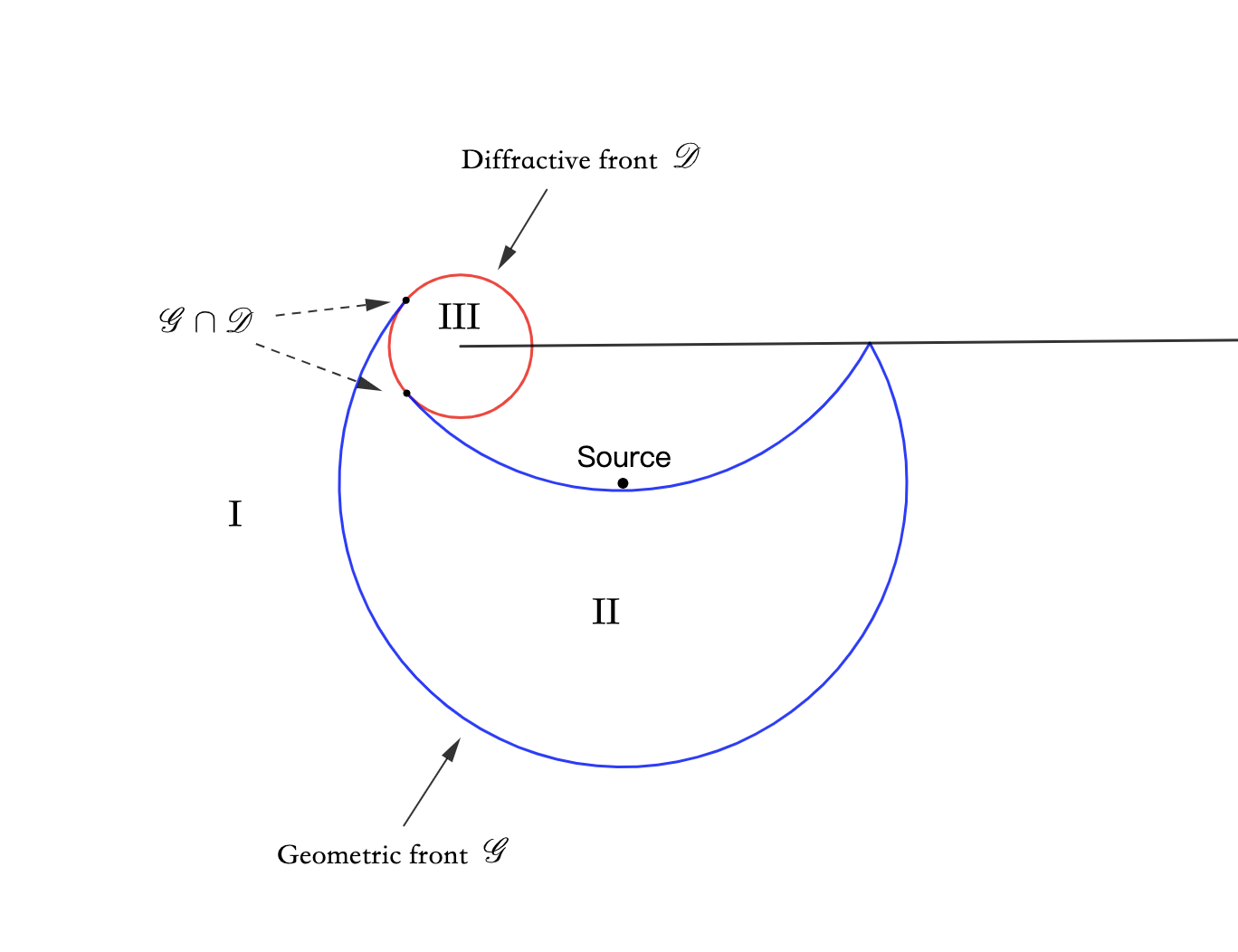}
  \caption{Geometric and Diffractive Front}
    \medskip
    \small
     Diffraction by a slit in $\RR^2$ which is a cone with cross section $N=(0, 2\pi)$. The wave source is marked on the picture. The diffractive front $\mathscr{D}$ is the boundary of region \RNum{3} (the red circle), while the geometric front $\mathscr{G}$ is part of the boundary of region \RNum{2} (the blue arcs). In this picture, the intersection of these two wave fronts $\mathscr{G}\cap\mathscr{D}$ consists of two points. 
  \label{pic2}
\end{figure}

\section{Propagation of Polyhomogeneity}
In this section, we briefly discuss the propagation of one-step polyhomogeneity on product cones in order to prove our diffractive symbol estimates. This also gives a one-step polyhomogeneity of the diffractive wave. Prior to these, we give a characterization of one-step polyhomogeneous solution to the wave equation on cones. Without loss of generality, we assume that $u$ is the spherical wave hitting the cone point at time $t=0$.

We first introduce the polyhomogeneous symbols: 

\begin{definition}[Polyhomogeneous symbols]
A symbol $a(x;\lambda)\mathcal \in {C}^{\infty}(X\times \mathbf{R}^l)$ is called (one-step) \emph{polyhomogeneous} of order $m$ if it admits an asymptotic symbol expansion:
$$a\sim\sum_{k=0}^{\infty} a_k(x,\lambda),$$
where $a_k$ are homogeneous symbols of order $m-k$, i.e., $a_k(x,\lambda)\in S^{m-k}(X; \RR^l)$ and $a_k(x, c\lambda)=c^{m-k} a_k(x,\lambda)$ for $c\in\RR_+$. We denote the polyhomogeneous symbol class of order $m$ by $S^m_{phg}(X; \RR^l)$. 
\end{definition}

We introduce the radial and tangential operators for later reference. Let 
\begin{equation}
\label{RV}
R=tD_t+rD_r
\end{equation}
denote the radial vector field on $\RR\times X$. And we also define tangential operators on $N$:
\begin{equation}
\label{TV}
Y_s=(I+\Delta_\theta)^{s/2}.
\end{equation}
Note that for the above operators and the d'Alembertian $\Box$, we have a group of commutator relations:
\begin{equation}
\label{conformalcommutator}
[\Box,R]  = -2i \Box \text{ and } [\Box, Y_s]=0
\end{equation}
which will be useful later to prove the propagation of polyhomogeneity. These commutator relations are motivated by Melrose and Wunsch's argument on propagation of conormality: from \cite[Theorem 4.8]{Melrose-Wunsch1}, these commutator relations imply that if $\Box u=0$ and $u$ is conormal to $\{t+r=0\}$ with respect to $\D_s$ for $t<0$ then $u$ is conormal to $\{t-r=0\}$ with respect to $\D_s$ for $t>0$. The conormality here is characterized by the operators $Y_s$, $R$ and $\Box$ through the following definition: 

\begin{definition}[Conormality on cones] $u\in\mathcal{D'}(\RR\times X)$ is \emph{conormal} to $\{t\pm r=0\}$ with respect to $\D_s$ if the iterative regularity: 
$$Y_iR^j\Box^k u\in L^2(\RR_t;\D_s)$$
for all $i,j,k\in\mathbb{N}$ and $t\lessgtr 0$. We use $I\D_s$ to denote this conormality, where $I$ stands for \emph{iterative}. 
\end{definition}

For a more detailed discussion on the conormal distribution, we refer to \cite{hormander2009analysis}. By the H{\"o}rmander-Melrose theory, on the product cone $X$ of total dimension $n$, the iterative regularity $I\D_{-m-1/2-\epsilon}$ for any $\epsilon>0$, with order that we will discuss later, is equivalent to the oscillatory integral definition of conormality of order $m-(n-1)/4$ which is defined in the following sense: 

\begin{definition}[Conormality again] $u\in\mathcal{D'}(\RR\times X)$ is \emph{conormal} to $\{t\pm r=0\}$ for $t\lessgtr0$ of order $k$ if it locally admits an oscillatory integral representation for $t\lessgtr 0$: 
\begin{equation}
\label{oscillatoryintegral}
u=\int e^{i(t\pm r)\lambda} a(r,\theta; \lambda) \, d\lambda\ \ \bmod \CI
\end{equation}
with Kohn-Nirenberg symbol $a(r,\theta; \lambda)\in S^{k+(n-1)/4}(X ; \RR_{\lambda})$. Let 
$$I^{k}(\RR\times X; N^* \{t\pm r=0\})$$ denote the space of all distributions on $\RR\times X$ that conormal to $\{t\pm r=0\}$ of order $k$, where $N^* \{t\pm r=0\}$ is the conormal bundle of $\{t\pm r=0\}$. 
\end{definition}

\begin{remark}
We use $ S^{m}(X; \RR_{\lambda})$ to denote the Kohn-Nirenberg symbol class on the cone $X$ of the order $m$ hereafter. 
\end{remark}

Following \cite{hormander2009analysis}, we have the following equivalent relation between the previous two definitions of conormality.

\begin{theorem}
The iterative regularity definition of conormality is equivalent to the oscillatory integral definition, more precisely, we have the following inclusions of conormal distributions on $\RR\times X$:
$$I\D_{-m-1/2}\subseteq I^{m-(n-1)/4}(\RR\times X; N^* \{t\pm r=0\}) \subseteq I\D_{-m-1/2-\epsilon}$$
for any $\epsilon>0$.
\end{theorem}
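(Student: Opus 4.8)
The plan is to recognize this as the conic, spacetime instance of Hörmander's characterization of conormal distributions by iterated regularity (see \cite[\S18.2]{hormander2009analysis}), reducing to the flat theory and then carefully matching the two order conventions while tracking the half-derivative of slack that produces the asymmetric $\epsilon$. First I would set up adapted coordinates near a point of $N^*\{t\pm r=0\}$, which for $t\lessgtr 0$ lies in the interior $X^{\mathrm o}$ away from the cone tip: take the characteristic coordinate $\phi=t\pm r$ transverse to $\Sigma=\{t\pm r=0\}$ and the tangential coordinates $(t\mp r,\theta)$. By the Corollary, on this interior region $\D_s$ coincides with the ordinary Sobolev space $H^s$, so membership in $L^2(\RR_t;\D_s)$ is, microlocally near $N^*\Sigma$, just local spacetime Sobolev regularity of order $s$; this lets me replace $\D_s$ by $H^s$ and reduce to Hörmander's setting.

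Next I would identify the test module. A direct symbol computation shows that $\sigma(R)=t\tau+r\xi_r$, the principal symbol of $Y_s$, and $\sigma(\Box)=\tau^2-\xi_r^2-|\xi_\theta|_h^2/r^2$ all vanish on $N^*\Sigma$; moreover $R$ and $Y_s$ are tangent to $\Sigma$ and, over $\CI$, generate the module $\mathcal{M}$ of first-order operators whose symbols vanish on $N^*\Sigma$, while $\Box$ is elliptic off the light cone with $N^*\Sigma\subset\mathrm{Char}(\Box)$. The commutator relations $[\Box,R]=-2i\Box$, $[\Box,Y_s]=0$, and $[R,Y_s]=0$ guarantee that $\mathcal{M}$ together with $\Box$ forms a filtered Lie algebra of operators, which is exactly the structural input that makes iterated regularity equivalent to a symbol bound. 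The role of $\Box$ is microlocalization: requiring $\Box^k u\in\D_s$ for all $k$ (automatic on $t\lessgtr0$, where the wave satisfies $\Box u=0$) forces $\mathrm{WF}(u)\subset\mathrm{Char}(\Box)$, after which tangential regularity under $R$ and $Y_s$ pins the wavefront to the single ray $N^*\Sigma$.

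For the inclusion $I\D_{-m-1/2}\subseteq I^{m-(n-1)/4}$ I would run the standard argument: iterated regularity under the generators of $\mathcal{M}$ is equivalent to the statement that the partial Fourier transform of $u$ in the normal variable $\phi$ is, for the tangential variables fixed, a Kohn--Nirenberg symbol $a(r,\theta;\lambda)$ of order $m$, and this $a$ is precisely the amplitude in $u=\int e^{i(t\pm r)\lambda}a\,d\lambda$. The order bookkeeping is the crux of the matching: the model computation $\int e^{i\phi\lambda}\lambda^m\,d\lambda$ lies in $H^{s}$ exactly for $s<-m-\tfrac12$, so symbol order $m$ corresponds to base regularity $-m-\tfrac12$; and with $\dim(\RR\times X)=n+1$ and codimension $1$, Hörmander's normalization sends symbol order $m$ to conormal order $m-(n+1)/4+1/2=m-(n-1)/4$. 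For the reverse inclusion $I^{m-(n-1)/4}\subseteq I\D_{-m-1/2-\epsilon}$ I would argue directly from the oscillatory representation: applying $R$ and $Y_s$ to $\int e^{i(t\pm r)\lambda}a\,d\lambda$ produces another integral of the same form with amplitude again in $S^m$ (integration by parts in $\lambda$ converts the factor $(t\pm r)\lambda$ generated by $R$ back into an order-$m$ symbol, and $Y_s$ acts only on the $\theta$-dependence), so every $Y_iR^j\Box^k u$ is conormal of symbol order $m$; since symbol order $m$ only guarantees $H^{s}$ for $s<-m-\tfrac12$, this lands in $\D_{-m-1/2-\epsilon}$ for every $\epsilon>0$, which is exactly the one-sided loss in the statement.

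I expect the main obstacle to be the module-generation step: proving that $R$ and $Y_s$ (with $\Box$) really do generate the full module of operators characteristic on $N^*\{t\pm r=0\}$, uniformly up to the conic degeneration as $r\to0$ and compatibly with the geometry of the cross-section $N$, where $Y_s=(I+\Delta_\theta)^{s/2}$ must supply all tangential directions. A secondary but delicate point is the sharp-versus-nonsharp Sobolev bookkeeping that generates the asymmetric $\epsilon$, together with the need to confine the argument to $t\lessgtr0$ so that the $\Box^k$ appearing in the definition of conormality contribute no regularity loss.
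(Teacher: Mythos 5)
Your proposal is correct and follows essentially the same route the paper takes: the paper gives no proof of this theorem, simply invoking H\"ormander's iterated-regularity characterization of conormal distributions, and your reduction --- identifying $R$, $Y_s$, $\Box$ as generators of the test module characteristic on $N^*\{t\pm r=0\}$, using the Corollary to replace $\D_s$ by $H^s$ away from the tip, and matching the order conventions ($a\in S^m \leftrightarrow u\in I^{m-(n-1)/4} \leftrightarrow u\in H^{-m-1/2-\epsilon}$, with the one-sided $\epsilon$ coming from the Besov/Sobolev endpoint) --- is exactly the standard argument being cited. The one point to watch is that $\Box$ is second order with symbol vanishing only to first order on $N^*\{t\pm r=0\}$, so applying it to a general conormal distribution raises the symbol order by one; hence the $\Box^k$ clause in the iterated-regularity definition is harmless only for (approximate) solutions of the wave equation, a caveat you correctly flag at the end and which the paper leaves implicit.
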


We also need the following interpolation lemma to raise the iterative conormal regularity from the Sobolev regularity and the lower iterative regularity. We refer the readers to \cite[Lemma 12.2]{melrose2008propagation} for a detailed proof of the lemma. It is presented in the form of coisotropic regularity there but the essence is the same.

\begin{lemma}[Interpolation]
\label{interpolation}
Suppose $u\in L^2(\RR_t;\D_s) \cap I\D_m$ for $s>m$, then $u\in I\D_{s-\epsilon}$ for any $\epsilon>0$.

\end{lemma}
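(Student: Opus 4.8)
The plan is to recast the lemma as an interpolation that trades the iterative (``module'') regularity order against the background Sobolev index. Write $\module$ for the filtered algebra of operators generated by the tangential, radial and d'Alembertian generators $Y_s$, $R$ and $\Box$, and let $\module^N$ denote the span of products of at most $N$ of these generators. For $\mu\in\RR$ and $N\in\mathbb{N}$ introduce the module regularity space
$$
H^{\mu,N}:=\{v\in\D'(\RR\times X): Av\in L^2(\RR_t;\D_\mu)\ \text{for all}\ A\in\module^N\},
$$
so that by definition $I\D_\mu=\bigcap_N H^{\mu,N}$. The two hypotheses then read $u\in H^{s,0}=L^2(\RR_t;\D_s)$ and $u\in H^{m,N}$ for every $N$, while the goal $u\in I\D_{s-\ep}$ is the assertion that $u\in H^{s-\ep,N_0}$ for every finite $N_0$. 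The essential observation is that this is a genuine two-parameter interpolation: one must spend the abundant module order available at the low base index $m$ in order to buy extra base regularity, pushing the base index up to $s-\ep$ at the cost of lowering the module order to a finite, but still arbitrarily large, value.

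The mechanism is as follows. The scale $\{L^2(\RR_t;\D_\mu)\}_\mu$ is a complex interpolation scale, with $[L^2(\RR_t;\D_m),L^2(\RR_t;\D_s)]_\theta=L^2(\RR_t;\D_{(1-\theta)m+\theta s})$, since $\D_\mu$ is the domain of the power $\Lap^{\mu/2}$ of a nonnegative self-adjoint operator. I would promote this to the module spaces by realizing $H^{\mu,N}$ as essentially the domain of $\Lambda^{\mu}$ composed with an $N$-fold module weight, where $\Lambda=(1+\Lap)^{1/2}$, and then interpolating the two families $H^{m,N}$ and $H^{s,0}$. Complex interpolation at parameter $\theta$ is expected to land in $H^{(1-\theta)m+\theta s,\,\lfloor(1-\theta)N\rfloor}$ (the base index interpolates linearly, the module order scales by $1-\theta$). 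Choosing $1-\theta=\ep/(s-m)$ makes the base index equal to $s-\ep$, while the output module order is $\lfloor \ep N/(s-m)\rfloor$. Given a target module order $N_0$, I take $N=\lceil N_0(s-m)/\ep\rceil$; since $u\in H^{m,N}$ for this, and indeed every, $N$, the interpolation yields $u\in H^{s-\ep,N_0}$. As $N_0$ is arbitrary, $u\in I\D_{s-\ep}$, as required. This is precisely the coisotropic interpolation of \cite[Lemma 12.2]{melrose2008propagation}, with the coisotropic module replaced by the conormal module $\module$ and the coisotropic background regularity replaced by the base spaces $L^2(\RR_t;\D_\mu)$.

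The hard part will be justifying the interpolation identity for the module spaces, because the generators do not commute: one has $[\Box,R]=-2i\Box$ and only $[\Box,Y_s]=0$, so $\module$ is merely filtered. In particular the naive pointwise interpolation $\|Av\|_{\D_{\mu'}}\lesssim\|Av\|_{\D_m}^{1-\theta}\|Av\|_{\D_s}^{\theta}$ is unavailable, since we do not know a priori that $Au\in\D_s$; this is exactly why the module order must absorb the regularity deficit rather than a single operator doing so. I would make the argument rigorous through a regularization: insert a holomorphic family $\Lambda^{z}$ together with a mollifier $(1+\rho\Lap)^{-K}$ with $\rho\downarrow 0$, commute it through each product $A\in\module^N$ using that $\Lambda$ commutes with $Y_s$ and has commutators with $R$ and $\Box$ of one order lower, apply Stein interpolation, and pass to the limit via a uniform-in-$\rho$ bound. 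The commutator bookkeeping — verifying that every term produced by moving $\Lambda^{z}$ past $A$ is again a module element of no greater order applied to $u$, hence controlled by the hypotheses — is the only genuinely technical step; once it is in place, both the interpolation and the limit are routine.
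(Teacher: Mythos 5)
The paper does not actually prove this lemma: it defers entirely to \cite[Lemma~12.2]{melrose2008propagation}, remarking only that the coisotropic module there is to be replaced by the conormal one. So the fair comparison is with that cited proof. Your two-parameter framing is the right one, and your arithmetic is consistent with what the cited lemma delivers: module order is traded linearly against base Sobolev index, and since the hypothesis supplies \emph{every} module order at the low index $m$, one can afford an arbitrarily unfavorable exchange rate, which is exactly why the conclusion holds for every $\epsilon>0$ but fails at $\epsilon=0$. Your choice $1-\theta=\ep/(s-m)$, $N=\lceil N_0(s-m)/\ep\rceil$ is the correct bookkeeping.

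Where your route diverges — and where the gap lies — is the mechanism. The proof in \cite{melrose2008propagation} is not a complex-interpolation argument on the two-parameter family of module spaces; it is an elementary iterated Cauchy--Schwarz/integration-by-parts estimate: for a single module generator $A$ one pairs $\langle \Lambda^{m+s}Au,Au\rangle=\langle \Lambda^{m+s}A^*Au,u\rangle+(\text{commutators})$ to control $\|Au\|_{L^2(\RR_t;\D_{(m+s)/2})}$ by $\|A^*Au\|_{L^2(\RR_t;\D_m)}\|u\|_{L^2(\RR_t;\D_s)}$, i.e.\ two module factors at the low index buy one module factor halfway up. Since all module orders are available at $m$, this gives $u\in I\D_{(m+s)/2}$, and iterating pushes the base index to $s-(s-m)/2^k$, whence $I\D_{s-\ep}$. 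By contrast, the identity you lean on, $[H^{m,N},H^{s,0}]_\theta\subseteq H^{(1-\theta)m+\theta s,\lfloor(1-\theta)N\rfloor}$, is precisely the content of the lemma restated in interpolation-space language; asserting it as ``expected'' and relegating the commutator bookkeeping for a non-commutative filtered module (where $[\Box,R]=-2i\Box$) to a ``technical step'' leaves the entire substance of the proof unestablished. The Stein-interpolation-with-mollifiers plan is plausibly completable, but as written it is a restatement of the goal rather than a proof of it; the elementary pairing argument above is both shorter and what the paper's citation actually supplies.
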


Suppose now that $\Box u=0,$ and for $t\lessgtr 0,$
$$
u \in I^{m-(n-1)/4}(\RR\times X; N^* \{t\pm r=0\}).
$$
We can thus write, for $t\lessgtr 0,$
\begin{equation}
\nonumber
u=\int e^{i(t\pm r)\lambda} a(r,\theta,\lambda) \, d\lambda\ \ \bmod \CI
\end{equation}
for some $a\in S^{m}(X; \RR_{\lambda})$. From this, recall also that $u \in L^2(\RR_t;\D_s)$ for all $s<-m-1/2.$ We employ the notation $u \in L^2(\RR_t;\D_{-m-1/2-0})$ to denote this type of space hereafter. For the propagation of conormality, note that the symbols of $Y_1, \Box$ and $R$ consist of the defining functions of $N^* \{t\pm r=0\}$; by showing the iterative regularity: 
$$Y_iR^j\Box^ku\in L^2(\RR_t;\D_s)$$
for any $i,j,k\in\mathbb{N}$, $t\lessgtr0$ and for some order $s$, we can show that the conormality is therefore preserved. For the propagation of polyhomogeneity, we need stronger conditions. In fact, in addition to the above preservation of iterative tangential regularity, we need that applying the radial vector field with particular shifts improves the regularity by one-step at each time. We show later that this actually leads to a complete characterization of polyhomogeneous distributions. Before proceeding to the proof of the complete characterization, we start by showing a characterization of the leading order polyhomogeneity. This characterization is due to Baskin and Wunsch \cite{wunsch2019phg}. From now on, we use the notation $I\D_{s-0}$ to denote that $u$ lies in the iterative regularity class $I\D_{s-\epsilon}$ for any $\epsilon>0$.

\begin{lemma}[Characterization of the Leading Polyhomogeneity]
\label{lemma:phg}
Assume $\Box u \in \CI$, $u$ is conormal to $\{t\pm r=0\}$ and takes the oscillatory integral form \eqref{oscillatoryintegral} for $t\lessgtr0$ (away from the cone point) with $a\in S^{m}(X; \RR_{\lambda})$. Then 
$$
u \in I\D_{-m-1/2-0}.
$$
Set 
$$\alpha=m+\frac{n+1}2.$$
Then we have
\begin{enumerate}
\item  If $a$ is polyhomogeneous of order $m$, then
$$
(R-i\alpha) u \in I\D_{-m+1/2-0}.
$$

\item Conversely, suppose that
$$
(R-i\alpha) u \in L^2(\RR_t;\D_{-m+1/2-0});
$$
then
$$
a=a_m+r_{m-1}
$$
where $a_m$ is homogeneous of degree $m$ and $r_{m-1}\in S^{m-1+0}(X; \RR_{\lambda}),$ where 
$$S^{m-1+0}(X; \RR_{\lambda}):=\bigcap_{r>m-1}S^r(X; \RR_{\lambda}).$$
\end{enumerate}
\end{lemma}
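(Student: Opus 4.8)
The plan is to reduce both parts to a single symbolic computation of how the radial operator acts on the oscillatory integral, and then to read off homogeneity from the Euler operator $\lambda\partial_\lambda$. The membership $u\in I\D_{-m-1/2-0}$ is immediate: since $a\in S^{m}(X;\RR_\lambda)$, the representation \eqref{oscillatoryintegral} places $u$ in $I^{m-(n-1)/4}$, and the inclusion $I^{m-(n-1)/4}\subseteq I\D_{-m-1/2-\epsilon}$ (every $\epsilon>0$) from the equivalence theorem gives the claim. For the rest I would first record the exact action of $R=tD_t+rD_r$. Differentiating the phase and integrating by parts once in $\lambda$ (the boundary contributions are smoothing and vanish $\bmod\,\CI$) gives, for $t\lessgtr 0$,
\[
Ru=i\int e^{i(t\pm r)\lambda}\,(\lambda\partial_\lambda-r\partial_r+1)\,a(r,\theta;\lambda)\,d\lambda \quad\bmod\,\CI,
\]
so $(R-i\alpha)u$ is again an oscillatory integral with symbol $i\,L_0a$, where $L_0:=\lambda\partial_\lambda-r\partial_r+1-\alpha$. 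With $\alpha=m+\tfrac{n+1}2$ one has the splitting $L_0=(\lambda\partial_\lambda-m)-(r\partial_r+\tfrac{n-1}2)$, which is the whole point: the first block measures failure of degree-$m$ homogeneity in $\lambda$, and the second will be controlled by the equation.

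Next I would extract the transport constraint from $\Box u\in\CI$. Applying $\Box=D_t^2-D_r^2+i\frac{n-1}{r}D_r-\frac{\Delta_\theta}{r^2}$ to $e^{i(t\pm r)\lambda}a$, the $\lambda^2$ terms cancel (the surfaces $\{t\pm r=0\}$ are characteristic), and the top-order ($\lambda^{m+1}$) part of the resulting symbol equals $\pm 2i\lambda\, r^{-(n-1)/2}\partial_r\!\big(r^{(n-1)/2}a\big)$. Since $\Box u\in\CI$ forces the full symbol to be of order $-\infty$, its order-$(m+1)$ component must vanish, which yields exactly
\[
\Big(r\partial_r+\tfrac{n-1}2\Big)a\in S^{m-1}(X;\RR_\lambda).
\]
This identity is the bridge that converts the radial operator into the Euler operator modulo one order, and it is where the hypothesis $\Box u\in\CI$ does its essential work.

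For Part (1) (forward), polyhomogeneity of $a$ gives $(\lambda\partial_\lambda-m)a\in S^{m-1}(X;\RR_\lambda)$ directly, since $\lambda\partial_\lambda$ acts as the degree on each homogeneous piece. Combining with the transport identity, $L_0a=(\lambda\partial_\lambda-m)a-(r\partial_r+\tfrac{n-1}2)a\in S^{m-1}(X;\RR_\lambda)$, so $(R-i\alpha)u$ is conormal with a polyhomogeneous symbol of order $m-1$; applying the same inclusion as in the first step (now with $m-1$ in place of $m$) gives $(R-i\alpha)u\in I\D_{-m+1/2-0}$.

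For Part (2) (converse), the hypothesis is only the plain regularity $L^2(\RR_t;\D_{-m+1/2-0})$, so I would first promote it. Because $R$ preserves iterative regularity, $(R-i\alpha)u\in I\D_{-m-1/2-0}$ a priori; intersecting with the hypothesis and invoking the interpolation Lemma~\ref{interpolation} upgrades this to $(R-i\alpha)u\in I\D_{-m+1/2-0}$, whence by the equivalence theorem its symbol $iL_0a$ lies in $S^{m-1+0}(X;\RR_\lambda)$. Adding the transport identity then gives $(\lambda\partial_\lambda-m)a\in S^{m-1+0}(X;\RR_\lambda)$. Finally I would integrate this Euler equation along the $\lambda$-dilation orbits: set $a_m(r,\theta;\lambda):=\lim_{s\to\infty}s^{-m}a(r,\theta;s\lambda)$, the limit existing because $\frac{d}{ds}\big[s^{-m}a(r,\theta;s\lambda)\big]=s^{-m-1}\big[(\mu\partial_\mu-m)a\big]_{\mu=s\lambda}$ is integrable in $s$ near infinity; this produces $a_m$ homogeneous of degree $m$ with $a-a_m=r_{m-1}\in S^{m-1+0}(X;\RR_\lambda)$, as claimed. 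I expect the main obstacle to be precisely this converse: the symbolic gain cannot be read off from the plain $L^2\D$ hypothesis alone, and the promotion to full iterative conormal regularity (using a priori conormality of $(R-i\alpha)u$ together with the interpolation lemma) is the delicate step, after which the transport identity and the integration of the Euler equation are essentially bookkeeping. The routine checks not to overlook are the vanishing of the $\lambda$-integration-by-parts boundary terms and the order-by-order reading of $\Box u\in\CI$, both valid $\bmod\,\CI$.
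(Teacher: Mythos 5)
Your proposal is correct and follows essentially the same route as the paper: the same integration by parts in $\lambda$ to identify the symbol of $(R-i\alpha)u$ as $i\big((\lambda\partial_\lambda-m)-(r\partial_r+\tfrac{n-1}{2})\big)a$, the same transport identity $(r\partial_r+\tfrac{n-1}{2})a\in S^{m-1}$ extracted from $\Box u\in\CI$, and for the converse the same promotion of the $L^2\D$ hypothesis to iterative conormal regularity via the a priori conormality of $(R-i\alpha)u$ and Lemma~\ref{interpolation}, followed by integrating the Euler equation $D_\lambda(\lambda^{-m}a)=\mathcal{O}(\lambda^{-2+0})$ to infinity. No substantive differences.
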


\begin{proof}
Applying the operator $R-i\alpha$ to \eqref{oscillatoryintegral} and integrating by parts, we obtain
\begin{equation}\label{Ru}
  \begin{aligned}
  (R-i\alpha) u &\equiv\int e^{i(t\pm r)\lambda} \left((t\pm r)\lambda + r D_r -i\alpha) a(r,\theta,\lambda\right) \, d\lambda\\
                &\equiv\int e^{i(t\pm r)\lambda} \left(-D_\lambda \lambda + r D_r -i\alpha\right) a(r,\theta,\lambda) \, d\lambda\\
     &\equiv\int e^{i(t\pm r)\lambda} \left(-\lambda D_\lambda+ r D_r -i(\alpha-1)\right) a(r,\theta,\lambda) \, d\lambda,
  \end{aligned}
\end{equation}
here we use $\equiv$ to denote equivalence modulo $\CI$-errors.
Now we also need to use the crucial fact that $\Box u\in \CI.$  Since
$$
\Box=D_t^2-D_r^2 +i\frac{n-1}r D_r-\frac{\Delta_\theta}{r^2},
$$
applying $\Box$ to \eqref{oscillatoryintegral} yields
$$
\int e^{i (t\pm r) \lambda} (\pm 2i \lambda  \partial_r\pm i\lambda (n-1)/r + \Delta) a \, d\lambda\equiv 0 \mod \CI; 
$$
thus $a$ must satisfy the transport equation
\begin{equation}
  \label{fulltransport}
  \left(r\partial_r+\frac{n-1}{2} \mp \frac{ir}{2\lambda} \Delta \right) a \in S^{-\infty}(X; \RR_{\lambda}),
\end{equation}
where $\Lap$ is the Laplacian on cones; in particular, since $\frac{ir}{2\lambda} \Delta a \in S^{m-1}(X; \RR_{\lambda})$, this forces
$$
\left(r\partial_r+\frac{n-1}{2} \right) a\in S^{m-1}(X; \RR_{\lambda}).
$$
Plug this into the right side of \eqref{Ru} yields
$$
\begin{aligned}
  (R-i\alpha) u  &= \int e^{i(t\pm r)\lambda}\left( \left(-\lambda D_\lambda+ i\frac{n-1}{2} -i(\alpha-1)\right) a(r,\theta, \lambda)+e\right) d\lambda\\
                       &= \int e^{i(t\pm r)\lambda} \left( \left(-\lambda D_\lambda- im\right) a(r,\theta,\lambda)+e\right) d\lambda,
\end{aligned}
$$
where $e \in S^{m-1}(X; \RR_{\lambda})$ is the remainder term. 

Thus if $a\in S^m(X; \RR_{\lambda})$ is a polyhomogeneous symbol, then so is $(\lambda D_\lambda+im ) a \in S^{m-1}(X; \RR_{\lambda})$ and we find that
$$(R-i \alpha) u \in I^{m-1-(n-1)/4}(\RR\times X; N^* \{t\pm r=0\})\subset I\D_{-m+1/2-0}$$
by the equivalence of two definitions of conormality. This proves the first part of the lemma.

Conversely, if $(R-i \alpha) u \in L^2(\RR_t;\D_{-m+1/2-0}),$ by the commutator relations \eqref{conformalcommutator} and the fact that the symbol of $R$ is one of the defining functions of $\{t\pm r=0\}$, we have $(R-i \alpha)u\in I\D_{-m-1/2-0}$ by conormality of $u$. Thus by Lemma \ref{interpolation} (also see \cite[Lemma 12.2]{melrose2008propagation}), we know $(R-i \alpha)u$ is also conormal with iterative regularity $I\D_{-m+1/2-0}$. Equivalently, 
$$
(R-i \alpha) u \in I^{m-1-(n-1)/4-0}(\RR\times X; N^* \{t\pm r=0\}).
$$
This forces the symbol of $(R-i \alpha)u$ to be in the class $S^{m-1+0}(X; \RR_{\lambda})$. 
By the proof of the first part, consider the order of the symbol of $(R-i \alpha)u$ gives $$(-\lambda D_{\lambda}-im)a\in S^{m-1+0}(X; \RR_{\lambda}).$$ Equivalently, $$D_{\lambda}(\lambda^{-m}a)\in\mathcal{O}(\lambda^{-2+0}).$$ Integrating it to infinity yields 
$$\lim_{\lambda\rightarrow\infty}\lambda^{-m}a=a_m\text{ and } \lambda^{-m}a-a_m=\mathcal{O}(\lambda^{-1+0}).$$
This implies that we must have the leading asymptotic decomposition as in the statement of lemma.
\end{proof}

For later reference we record a sharpening of the symbol computation above. In particular, note that if $\Box u \in \CI$ then we can compute the symbol of $(R-i\alpha) u$ explicitly by substituting the full transport equation \eqref{fulltransport} into \eqref{Ru} to obtain
$$
(R-i\alpha) u =\int e^{i(t\pm r)\lambda}b(r,\theta,\lambda) \, d\lambda
$$
where
\begin{equation}
\label{precisetesting}
b= \left(-\lambda D_\lambda -im +\frac{r}{2\lambda} \Lap \right) a.
\end{equation}
Therefore $(R-i\alpha)$ acting on $u$ can be characterized by $(-\lambda D_\lambda -im +\frac{r}{2\lambda} \Lap)$ acting on its symbol $a$. We now generalize Lemma \ref{lemma:phg} to get a characterization of full polyhomogeneity by induction. The result given in the following lemma is similar to the characterization given by Joshi \cite{joshi1997intrinsic} for polyhomogeneous Lagrangian distributions on smooth manifolds, though the Hamilton vector field of our operator $R$ is not a multiple of the radial vector field of fiber variables as in \cite{joshi1997intrinsic}. 

\begin{lemma}[Characterization of the Complete Polyhomogeneity]
\label{lemma:fullphg}
Assume $\Box u \in \CI$, $u$ is conormal to $\{t\pm r=0\}$ and takes the oscillatory integral form \eqref{oscillatoryintegral} for $t\lessgtr 0$ with $a\in S^m(X; \RR_{\lambda})$. Set 
$$\alpha_j=m+\frac{n+1}2-j+1.$$
Then we have
\begin{enumerate}
\item  If $a$ is polyhomogeneous of order $m$, then
$$
\prod_{j=1}^k (R-i\alpha_j) u \in I\D_{-m-1/2+k-0}
$$
is conormal to $\{t\pm r=0\}$ for $t\lessgtr0$. 

\item Conversely, suppose that for the above $\alpha_1, ..., \alpha_k$, 
 $$
\prod_{l=1}^j\left(R-i\alpha_l\right) u \in L^2(\RR_t;\D_{-m-1/2+j-0});
$$
for $1\leq j\leq k$\textcolor{blue}. Then
$$
a=a_m+a_{m-1}+\cdots+a_{m-k+1}+r_{m-k}
$$
where $a_m, a_{m-1},..., a_{m-k+1}$ are homogeneous symbols with the degrees same as their indices and $r_{m-k}\in S^{m-k+0}(X; \RR_{\lambda}).$
\end{enumerate}
\end{lemma}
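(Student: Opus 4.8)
The plan is to prove both parts by induction on $k$, using Lemma \ref{lemma:phg} as the base case $k=1$ and the sharpened symbol computation \eqref{precisetesting} as the inductive engine. The key observation is that the operators $(R - i\alpha_j)$ are indexed so that each one is tailored to the symbol order obtained after applying the previous ones: if $u$ has symbol of order $m$, then $\prod_{l=1}^{j-1}(R-i\alpha_l)u$ should have symbol of order $m-j+1$, and $\alpha_j = (m-j+1) + \frac{n+1}{2}$ is precisely the value of ``$\alpha$'' that Lemma \ref{lemma:phg} assigns to a symbol of that order. So the induction step amounts to applying the $k=1$ result to the ``new'' distribution $v := \prod_{l=1}^{k-1}(R-i\alpha_l)u$, after verifying that $v$ again satisfies the hypotheses of Lemma \ref{lemma:phg}.

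For part (1), I would first record from the commutator relation $[\Box, R] = -2i\Box$ that $\Box \prod_{l}(R-i\alpha_l)u \in \CI$ whenever $\Box u \in \CI$, since each factor $(R-i\alpha_l)$ maps $\ker \Box$ (mod $\CI$) into itself up to a harmless multiple of $\Box$; this guarantees $v$ still solves the wave equation modulo smooth errors. Next, using \eqref{precisetesting}, the symbol of $(R - i\alpha)u$ is $b = (-\lambda D_\lambda - im + \frac{r}{2\lambda}\Lap)a$, and the operator $-\lambda D_\lambda - im$ preserves polyhomogeneity while lowering the order by one, and $\frac{r}{2\lambda}\Lap$ lowers the order by one as well and also preserves polyhomogeneity (as $\Lap$ acts tangentially and in $r$ with smooth coefficients after the rescaling). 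Hence if $a \in S^m_{\phg}$ then the symbol of $v$ lies in $S^{m-k+1}_{\phg}$, and applying the first part of Lemma \ref{lemma:phg} to $v$ with the correctly matched index $\alpha_k$ yields $\prod_{j=1}^k(R-i\alpha_j)u \in I\D_{-m-1/2+k-0}$, conormal to $\{t\pm r = 0\}$.

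For the converse, part (2), I would again argue inductively but now running the second half of Lemma \ref{lemma:phg} at each stage. Assuming the hypotheses $\prod_{l=1}^j(R-i\alpha_l)u \in L^2(\RR_t;\D_{-m-1/2+j-0})$ hold for all $1 \le j \le k$, the inductive hypothesis provides the decomposition $a = a_m + \cdots + a_{m-k+2} + r_{m-k+1}$ with $r_{m-k+1} \in S^{m-k+1+0}$. The distribution $v = \prod_{l=1}^{k-1}(R-i\alpha_l)u$ then has a symbol whose leading part is explicitly computable from the homogeneous pieces of $a$ via repeated application of $-\lambda D_\lambda - im + \frac{r}{2\lambda}\Lap$; crucially its top-order symbol is governed by the remainder $r_{m-k+1}$ of order $m-k+1$. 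Applying the converse direction of Lemma \ref{lemma:phg} to $v$, whose symbol has order $m-k+1$ and for which $(R-i\alpha_k)v \in L^2(\RR_t;\D_{-m-1/2+k-0})$ by hypothesis, extracts the next homogeneous term $a_{m-k+1}$ and improves the remainder to $r_{m-k} \in S^{m-k+0}$, closing the induction.

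The main obstacle will be bookkeeping the interaction of the two noncommuting pieces of the symbol operator in \eqref{precisetesting}: the Euler-type operator $-\lambda D_\lambda - im$ and the ``curvature'' term $\frac{r}{2\lambda}\Lap$. When composing $k$ factors one must check that the $\frac{r}{2\lambda}\Lap$ contributions, though each lowering order by one, genuinely preserve polyhomogeneity at every stage and that the index shift $\alpha_j \to \alpha_{j+1}$ (i.e.\ the ``$+1$'' built into $\alpha_j = m + \frac{n+1}{2} - j + 1$) exactly compensates the $-im$ becoming $-i(m-j+1)$ after each application. Making this compatibility precise — so that the hypothesis of Lemma \ref{lemma:phg} is met by $v$ with the \emph{correct} value of $\alpha$ at each step — is the heart of the argument; the conormality, $\Box v \in \CI$, and interpolation (Lemma \ref{interpolation}) inputs are then routine reapplications of the $k=1$ reasoning.
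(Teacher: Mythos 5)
Your proposal is correct and follows essentially the same route as the paper: induction on $k$ with Lemma \ref{lemma:phg} as the base case, the commutator relation \eqref{conformalcommutator} to keep each $\prod_l(R-i\alpha_l)u$ a solution modulo $\CI$, the symbol operator from \eqref{precisetesting} (with the index shift $\alpha_j$ matched to the order $m-j+1$) as the inductive engine, and Lemma \ref{interpolation} to upgrade $L^2$ regularity to iterative conormal regularity in the converse direction. The only point you leave slightly implicit — how the decomposition of the symbol $b_{k}$ of $v$ pulls back to the claimed decomposition of $a$ itself — is handled in the paper by the same observation you make (the top-order part of $b_k$ is governed by the remainder of $a$), with a citation to Joshi.
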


\begin{proof}
First assume $a\in S^m(X; \RR_{\lambda})$ is polyhomogeneous. By an integration by parts argument and the fact that $\Box u\in \CI$, we see that 
$$
\prod_{j=1}^k \left(R-i\alpha_j\right) u \in I\D_{-m-1/2+k-0}
$$
is implied by
\begin{equation}
\label{phgsymbol}
\prod_{j=1}^k \left(-\lambda D_\lambda +i\frac{n+1}{2}-i\alpha_j +\frac{r}{2\lambda} \Lap \right) a \in S^{m-k}(X; \RR_{\lambda})
\end{equation}
in the Kohn-Nirenberg class. This is aided by the commutator relation \eqref{conformalcommutator} which makes $\prod_{j=1}^k(R-i\alpha_j)u$ for all $k\in\mathbb{N}$ again a solution to the wave equation modulo smooth terms, so we can apply the transport equation substitution iteratively as we did in Lemma \ref{lemma:phg}. At each step, $\left(-\lambda D_\lambda +i\frac{n+1}{2}-i\alpha_j +\frac{r}{2\lambda} \Lap\right)$ acting on polyhomogenous symbols still gives polyhomogeneous ones. Also note that $\frac{r}{2\lambda} \Lap$ always lowers the symbol order by one, and the coefficient $\alpha_j$, when combining with $R$ to annihilate the principal part of each step, only depends on the order of the symbol it acts on. 

By Lemma \ref{lemma:phg}, we know for $\alpha_1=\frac{n+1}{2}+m$, 
$$\left(-\lambda D_\lambda +i\frac{n+1}{2}-i\alpha_1 +\frac{r}{2\lambda} \Lap \right) a \in S^{m-1} (X; \RR_{\lambda})\text{ polyhomogeneous. }$$ 
Taking this as the new symbol $b$ of the conormal distribution, then applying $(R-i\alpha_2)$ to the new distribution gives
$$\left(-\lambda D_\lambda +i\frac{n+1}{2}-i\alpha_2 +\frac{r}{2\lambda} \Lap \right) b \in S^{m-2} (X; \RR_{\lambda})\text{ polyhomogeneous. }$$
Applying this argument repeatedly with
$$
\alpha_j=\frac{n+1}{2}+(m-j+1) \text{ up to } j=k,
$$
we proved that the first statement is true.  

For the second part, we can work by induction. In the previous lemma, we have showed that $(R-i\alpha_1)$ raises regularity almost by one implies the leading one-step polyhomogeneity. Assume the conclusion in the second part of this lemma is true for up to $k$-terms one-step polyhomogeneity, i.e., assume
$$
\prod_{l=1}^j(R-i\alpha_l) u \in  L^2(\RR_t;\D_{-m-1/2+j-0})
$$
for $1\leq j\leq k$ imply 
\begin{equation}
\label{k-phg}
a=a_m+a_{m-1}+\cdots+a_{m-k+1}+r_{m-k}    
\end{equation}
with $a_m, a_{m-1},..., a_{m-k+1}$ homogeneous symbols and $r_{m-k}\in S^{m-k+0}(X; \RR_{\lambda}).$ Thus 
$$
b_k:=\prod_{j=1}^k\left(-\lambda D_\lambda +i\frac{n+1}{2}-i\alpha_j +\frac{r}{2\lambda} \Lap \right) a \in S^{m-k+0} (X; \RR_{\lambda})
$$
as in the proof of the first part. Now we consider 
$$
\prod_{j=1}^{k+1}(R-i\alpha_{j}) u \in  L^2(\RR_t;\D_{-m-1/2+k+1-0})
$$
This has the oscillatory integral form: 
$$
(R-i\alpha_{k+1})\int e^{i(t\pm r)\lambda}b_k(r,\theta,\lambda) \, d\lambda. 
$$
Now we apply Lemma \ref{interpolation}, since by the assumption
$$\prod_{j=1}^{k+1}(R-i\alpha_{j}) u \in  L^2(\RR_t;\D_{-m-1/2+k+1-0})$$
and 
$$\prod_{j=1}^{k}(R-i\alpha_{j}) u \in I\D_{-m-1/2+k-0}$$
conormal to $\{t\pm r=0\}$. Therefore, by Lemma \ref{interpolation},
$$\prod_{j=1}^{k+1}(R-i\alpha_{j}) u \in I\D_{-m-1/2+k+1-0}$$
conormal to $\{t\pm r=0\}$ and thus
$$
\left(-\lambda D_\lambda +i\frac{n+1}{2}-i\alpha_{k+1} +\frac{r}{2\lambda} \Lap\right)b_k\in S^{m-k-1+0}(X; \RR_{\lambda}).
$$
Since $\frac{r}{2\lambda} \Lap$ lowers the symbol order by one, we have: 
$$
\left(-\lambda D_\lambda +i\frac{n+1}{2}-i\alpha_{k+1} \right)b_k\in S^{m-k-1+0}(X; \RR_{\lambda}).
$$
Plugging in $\alpha_{k+1}$ defined above, using the same argument as in the proof of Lemma \ref{lemma:phg}, this forces $b_k$ to take the form $b_k=\tilde{b}_{m-k}+\tilde{r}_{m-k-1}$, where $\tilde{b}_{m-k}$ is homogeneous of order $m-k$ and $\tilde{r}_{m-k-1}\in S^{m-k-1+0}(X; \RR_{\lambda})$. Considering that the action of $\left(-\lambda D_\lambda +i\frac{n+1}{2}-i\alpha_{j} +\frac{r}{2\lambda} \Lap\right)$ for $1\leq j\leq k$ on the symbol $a$ gives $b_k$, together with $a$ being $k$-term one-step polyhomogeneous \eqref{k-phg}, $a$ must take the form
$$
a=a_m+a_{m-1}+\cdots+a_{m-k}+r_{m-k-1}
$$
where $a_m, a_{m-1},..., a_{m-k}$ are homogeneous with degrees given by their indices and $r_{m-k-1}\in S^{m-k-1+0}(X; \RR_{\lambda})$ (c.f. \cite[Proposition 2.1]{joshi1997intrinsic}). We thus finished our induction. 
\end{proof}

\begin{remark}
It is worth to point out that this lemma cannot be generalized directly to give a similar characterization of one-step polyhomogeneity on general non-product cones $\left(\RR\times Y, dr^2+r^2h(r,\theta,d\theta)\right)$. This is due to the fact that the commutator equation is now $[\Box, R]=-2i\Box+E$ with $E$ an error term. The existence of this error term makes even $(R-i\alpha_1)u$ no longer a solution to the wave equation, for which being a solution is an essential property for us to build our characterization.
\end{remark}

Recall that our ultimate goal in this section is to develop the propagation of polyhomogeneity. Prior to this we use the foregoing results to obtain a propagation of leading order polyhomogeneity which will be used later to give a diffraction symbol estimate. Then we can show the propogation of full polyhomogeneity as a corollary.

First, we recall from \cite[Theorem 4.8]{Melrose-Wunsch1} the more basic results on propagation of conormality. These follow easily in the situation at hand by commutation of $R^k$ and $Y_s$ through the equation, together with the observation that the symbols of $R,$ $Y_1,$ and $\Box$ form a set of defining functions to the conormal bundle of $\{t\pm x=0\}.$ The continuity of the evolution map asserted in the following proposition follows from the proof of \cite[Theorem 4.8]{Melrose-Wunsch1} or, as usual, from the Inverse Mapping Theorem; the essence of the direct proof is that norms of powers of the test operators are conserved relative to domains of powers of the Laplacian which agree with Sobolev spaces away from $r=0$; converting these estimates to estimates in symbol spaces requires a Sobolev embedding step, which loses at most a fixed number of derivatives (which can then be interpolated away up to an $\ep$). 
For brevity, we abbreviate the restriction to a time interval by
$$
a_{(c,d)} \equiv a\rvert_{t \in (c,d)}.
$$

\begin{proposition}[Continuity of Symbol Evolution]
\label{proposition:conormal}
Suppose that $\Box u=0$ and that for $t<0,$ $u \in I^{m-(n-1)/4}(\RR\times X; N^* \{t+r=0\}).$ Then we have:

\begin{enumerate}
\item For $t>0,$ $u \in I^{m-(n-1)/4+0}(\RR\times X; N^* \{t-r=0\}),$ thus the conormality is conserved. 
\item The map from negative time data to positive-time data is continuous in the following sense: for any $a<b<0<c<d,$ any $\ep>0,$ and any $M,$ there exist $M'$ and $C$ such that if we write $u$ in the form \eqref{oscillatoryintegral} with symbol $a$ then
  $$
||a_{(c,d)}||_{S^{m+\ep}_M}\leq C ||a_{(a,b)}||_{S^{m}_{M'}}. 
  $$
\end{enumerate}
\end{proposition}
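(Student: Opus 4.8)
The plan is to run the entire argument at the level of the iterative, energy-based characterization of conormality, entering and leaving that framework through the equivalence of the two definitions of conormality, and to obtain the quantitative estimate of part (2) by tracking the finite derivative loss incurred at each translation. First I would convert the hypothesis into iterative form: by the inclusion $I^{m-(n-1)/4}\subseteq I\D_{-m-1/2-0}$ from the equivalence theorem, the assumption $u\in I^{m-(n-1)/4}(\RR\times X; N^*\{t+r=0\})$ for $t<0$ gives $Y_iR^j u\in L^2(\RR_t;\D_{-m-1/2-0})$ for all $i,j$ and $t<0$, the factors $\Box^k$ with $k\geq 1$ in the definition of conormality contributing nothing since $\Box u=0$.

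The structural input is the commutator relation \eqref{conformalcommutator}: from $[\Box,R]=-2i\Box$ one gets $\Box R^j u=(R-2i)^j\Box u=0$, and from $[\Box,Y_s]=0$ one gets $\Box Y_s u=Y_s\Box u=0$, so every test image $v=Y_iR^j u$ is again a solution of the wave equation. Its $\D_s$-regularity is therefore governed by conservation of energy: since $\Lap$ is the Friedrichs extension, $U(t)=e^{-it\sqrt{\Lap}}$ commutes with $\Lap^{s/2}$ and is unitary on each $\D_s$, so the $\D_s$-norm of a solution is conserved in $t$, and on the bounded windows the explicit factor of $t$ inside $R$ produces only window-dependent constants. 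This carries $v\in L^2(\RR_t;\D_{-m-1/2-0})$ from $t<0$ to $t>0$. Because the symbols of $R$, $Y_1$ and $\Box$ form a common set of defining functions for both $N^*\{t+r=0\}$ and $N^*\{t-r=0\}$—the field $R$ being tangent to both fronts as it generates the spacetime dilation preserving the light cone—the same operators $Y_iR^j$ characterize conormality to the outgoing front. We conclude $u\in I\D_{-m-1/2-0}$ conormal to $\{t-r=0\}$ for $t>0$, and the equivalence theorem in the reverse direction (with its $\ep$-loss) returns $u\in I^{m-(n-1)/4+0}(\RR\times X; N^*\{t-r=0\})$, which is part (1); this is the specialization to product cones of the conormality propagation of \cite[Theorem 4.8]{Melrose-Wunsch1}, reproved here via the commutators.

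For the continuity estimate (2) I would make each of these three translations quantitative over the fixed windows. The symbol seminorm $\|a_{(a,b)}\|_{S^m_{M'}}$ controls finitely many of the iterative norms $\|Y_iR^j u\|_{L^2((a,b);\D_s)}$, where the range of $i,j$—hence the value of $M'$—is dictated by the target index $M$; energy conservation then bounds the corresponding norms on $(c,d)$ by a constant $C$ times those on $(a,b)$, after absorbing the bounded lower-order terms generated by the commutators; finally these iterative norms on $(c,d)$ dominate $\|a_{(c,d)}\|_{S^{m+\ep}_M}$, with the $\ep$-loss appearing only in this last conversion. Alternatively, once (1) is established as a statement about continuous linear maps of Fréchet spaces, the bound follows abstractly from the Closed Graph or Inverse Mapping Theorem, but I would prefer the direct route since it exhibits the constants.

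I expect the main obstacle to be precisely this last conversion. Passing from the conserved $L^2$-based $\D_s$-energies to the pointwise Kohn–Nirenberg seminorms requires a Sobolev embedding in the base and fiber variables, organized uniformly in $\lambda$ and made to respect the homogeneity structure. Away from $r=0$ the domains $\D_s$ agree with ordinary Sobolev spaces, so the embedding is standard and costs only a fixed finite number of derivatives; the care lies in bookkeeping this loss and in handling the cone point, where one must invoke the equivalence theorem rather than a naive embedding. Since the loss is finite, it is then interpolated down to an arbitrary $\ep$ via Lemma \ref{interpolation} (see also \cite[Lemma 12.2]{melrose2008propagation}), yielding the stated estimate.
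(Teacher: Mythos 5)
Your proposal is correct and follows essentially the same route as the paper, which proves this proposition in the paragraph preceding its statement by deferring to \cite[Theorem 4.8]{Melrose-Wunsch1}: commuting $R^j$ and $Y_i$ through the equation, using that their symbols together with that of $\Box$ define the conormal bundles of $\{t\pm r=0\}$, conserving the iterated $\D_s$-norms by unitarity of the wave group, and converting back to symbol seminorms via a Sobolev embedding whose fixed finite loss is interpolated down to $\ep$. The Closed Graph/Inverse Mapping Theorem alternative you mention is likewise the one the paper offers.
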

Here the symbol norm $S^m_M$ is given by
$$
\sum_{|\alpha|+|\beta|\leq M} \sup |\langle\lambda\rangle^{-m} (\partial_{r,\theta})^\alpha (\lambda \partial_\lambda)^\beta) a |.
$$

\begin{remark} 
We will abbreviate the existence of symbol estimates of this type as ``$a$ satisfies effective estimates'' in what follows. The effective estimates here is crucial to prove the propagation of polyhomogeneity and to compute the diffraction coefficient using the mode-by-mode solutions. 
\end{remark}

\begin{proposition}[Propagation of Leading Order Polyhomogeneity]
\label{proposition:phg}
  Suppose that $\Box u=0$ and that for $t<0,$ $u \in I_{\phg}^{m-(n-1)/4}(\RR\times X; N^* \{t+r=0\}),$ i.e., conormal distributions with polyhomogeneous symbols, Then
 \begin{enumerate}
  \item For $t>0,$ $u \in I^{m-(n-1)/4}(\RR\times X; N^* \{t-r=0\})$ and has an oscillatory integral representation of the form \eqref{oscillatoryintegral} where its symbol $a$ is of the form $a_m+r_{m-1}$ with $r_{m-1} \in S^{m-1+0}(X; \RR_{\lambda}).$
  \item Moreover, for any fixed $\ep>0$, each symbol seminorm of $r_{m-1}$  in $S^{m-1+\epsilon}(X; \RR_{\lambda})$ is bounded in terms of finitely many seminorms of the symbol of $u$ for $t<0.$ 
  \end{enumerate}
\end{proposition}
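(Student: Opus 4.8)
The plan is to combine the two characterization results already established, Lemma~\ref{lemma:phg} and Proposition~\ref{proposition:conormal}, exploiting the fact that the test operator $R-i\alpha$ maps wave solutions to wave solutions. The underlying mechanism is this: on a polyhomogeneous symbol $R-i\alpha$ improves the conormal order by one step (Lemma~\ref{lemma:phg}(1)), and since $(R-i\alpha)u$ is again a solution of the wave equation, propagation of plain conormality (Proposition~\ref{proposition:conormal}) carries this one-step improvement across the cone point to $t>0$; the converse half of Lemma~\ref{lemma:phg} then reads off the homogeneous leading term of the positive-time symbol.

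First I would record that $(R-i\alpha)u$ is an exact wave solution: from the commutator relation \eqref{conformalcommutator}, $\Box R u = R\Box u + [\Box,R]u = -2i\Box u = 0$, and $\Box(i\alpha u)=0$, so $\Box(R-i\alpha)u = 0$. Next, because for $t<0$ the symbol $a$ of $u$ is polyhomogeneous of order $m$, part (1) of Lemma~\ref{lemma:phg} gives $(R-i\alpha)u \in I\D_{-m+1/2-0}$ for $t<0$, i.e.\ $(R-i\alpha)u$ is conormal to $\{t+r=0\}$ with order lower by (almost) one. I would then invoke Proposition~\ref{proposition:conormal} twice: applied to $u$ itself it yields, for $t>0$, an oscillatory integral representation \eqref{oscillatoryintegral} with symbol $a$ conormal to $\{t-r=0\}$; applied to the wave solution $(R-i\alpha)u$ it propagates the improved conormality from $t<0$ to $t>0$, so that in particular $(R-i\alpha)u \in L^2(\RR_t;\D_{-m+1/2-0})$ for $t>0$. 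At this point the hypotheses of part (2) of Lemma~\ref{lemma:phg} hold for the positive-time symbol, and that lemma produces the decomposition $a = a_m + r_{m-1}$ with $a_m$ homogeneous of degree $m$ and $r_{m-1}\in S^{m-1+0}(X;\RR_\lambda)$, which is the first assertion.

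For the effective estimates of the second assertion, I would track seminorm bounds through the three steps just described. The map $a \mapsto {}$(symbol of $(R-i\alpha)u$) is given explicitly by \eqref{precisetesting}, so its $S^{m-1}$ seminorms are controlled by finitely many $S^m$ seminorms of $a$; the continuity statement Proposition~\ref{proposition:conormal}(2) then bounds the positive-time seminorms of this symbol by its negative-time seminorms, at the cost of an arbitrarily small loss $\ep$; and the integration-to-infinity argument from the proof of Lemma~\ref{lemma:phg}(2), which recovers $a_m = \lim_{\lambda\to\infty}\lambda^{-m}a$ from $D_\lambda(\lambda^{-m}a)=\mathcal{O}(\lambda^{-2+0})$, expresses the $S^{m-1+\ep}$ seminorms of $r_{m-1}$ in terms of the positive-time seminorms of the symbol of $(R-i\alpha)u$. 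Composing the three bounds yields the claim. The step I expect to be the main obstacle is making this final extraction genuinely quantitative and uniform: Proposition~\ref{proposition:conormal} supplies effective estimates only for plain conormality, whereas Lemma~\ref{lemma:phg}(2) is stated as an extraction result, so one must check that the passage from the improved regularity of $(R-i\alpha)u$ back to the structure of $a$---including the interpolation via Lemma~\ref{interpolation} and the careful accounting of the overlapping $\pm0$ losses---produces seminorm bounds rather than mere membership.
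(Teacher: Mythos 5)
Your proposal is correct and follows essentially the same route as the paper: apply $R-i\alpha$, use the commutator relation to see it is again a wave solution, propagate its improved conormality via Proposition~\ref{proposition:conormal}, and read off the decomposition $a=a_m+r_{m-1}$ from Lemma~\ref{lemma:phg}(2). The obstacle you flag at the end is exactly what the paper's proof addresses by rerunning the integration-to-infinity argument quantitatively (normalizing to $m=0$, writing $D_\lambda a=-\lambda^{-1}b+\frac{r}{2\lambda^2}\Lap a$, and integrating with the effective estimates \eqref{est1}--\eqref{est2}), so your outline matches the paper's argument in both structure and substance.
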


\begin{proof}
  Take $\alpha$ as above.  Then for $t<0,$
  $$
(R-i\alpha) u\in I_{\phg}^{m-(n-1)/4-1}(\RR\times X; N^* \{t+r=0\})
$$
We let $b\in S^{m-1}(X; \RR_{\lambda})$ denote the total symbol of $(R-i\alpha) u,$ i.e.\
$$
(R-i\alpha)u=\int e^{i(t+ r)\lambda} b(r,\theta,\lambda)d\lambda \text{ for } t<0,
$$
while
$$
u=\int e^{i(t+ r)\lambda} a(r,\theta,\lambda)d\lambda
$$
for $a \in S^m_{phg}(X; \RR_{\lambda}).$ 

Then for $t>0,$ since by commutator relations \eqref{conformalcommutator} in the beginning of this section $\Box (R-i\alpha) u=0,$ Proposition~\ref{proposition:conormal} implies that for $t>0$ and for all $\ep>0,$
$$
(R-i\alpha) u\in I^{m-(n-1)/4-1+\ep}(\RR\times X; N^* \{t-r=0\}),
$$
with symbol seminorms depending on those of $b.$ By definition of conormal distribution, this implies 
$$
(R-i\alpha)u\in  L^2(\RR_t;\D_{-m+1/2-0})
$$
for $t>0$, Thus by Lemma \ref{lemma:phg} we have the corresponding symbol decomposition of $a$ and we therefore proved the first part of our proposition.

To prove the quantitative estimates of the second part, we note that by
\eqref{precisetesting} and the quantitative propagation of conormality
from Proposition~\ref{proposition:conormal} we have for $t>0$
$$
b\equiv \left(-\lambda D_\lambda-im +\frac{r}{2\lambda} \Lap \right) a \in S^{m-1+\epsilon}(X; \RR_{\lambda})
$$
for all $\epsilon>0,$ together with symbol estimates: whenever
$\alpha_-<\beta_-<0<\alpha_+<\beta_+,$ for all $M,$
\begin{equation}
 \begin{split}
||b_{(\alpha_+,\beta_+)}||_{S^{m-1+\ep}_M} 
  & \leq C ||b_{(\alpha_-,\beta_-)}||_{S^{m-1}_{M'}} \\
  & = C ||(-\lambda D_\lambda-im +\frac{r}{2\lambda} \Lap )a_{(\alpha_-,\beta_-)}||_{S^{m-1}_{M'}} \\
  &\leq C ||a_{(\alpha_-,\beta_-)}||_{S^m_{M'}}\\
  \end{split}
\end{equation}
for $M'.$

Now without loss of generality we can assume $m=0$, otherwise we use $$\ta=\lambda^{-m} a,\quad \tb=\lambda^{-m} b$$ to make the order of $a$ zero. This yields
$$
b= \left(-\lambda D_\lambda +\frac{r}{2\lambda} \Lap \right) a \in S^{-1+\epsilon}(X; \RR_{\lambda}) \text{ for } t\in (\alpha_+,\beta_+)
$$
again enjoying the same type of effective estimates as $b.$
Thus
\begin{equation}
\label{a_dl}
D_\lambda a=-\lambda^{-1} b+\frac{r}{2\lambda^2} \Lap a \in S^{-2-\epsilon}(X; \RR_{\lambda}) \text{ for } t\in(\alpha_+,\beta_+),
\end{equation}
again the RHS enjoys effective estimates since $r/(2\lambda^2)
\Lap$ is a continuous map from symbols of order $s$ to symbols of order $s-2,$ and multiplication by powers of $\lambda$ is a continuous symbol map. In particular, then  
\begin{equation}
\label{est1}
 ||\lambda^{-1}b_{(\alpha_+,\beta_+)}||_{S^{m-2+\ep}_M} \leq C ||\lambda^{-1} a_{(\alpha_-,\beta_-)}||_{S^{m-1}_M} 
\end{equation}
 and 
\begin{equation}
\label{est2}
 ||\frac{r}{2\lambda^2} a_{(\alpha_+,\beta_+)}||_{S^{m-2+\ep}_M} \leq C  ||\lambda^{-2} a_{(\alpha_-,\beta_-)}||_{S^{m-2}_M} 
\end{equation}
Integrating \eqref{a_dl} from $\lambda=\pm\infty$ with
$C(r,\theta,\sgn\lambda)$ as constant of integration (cf.\ \cite[Propasition 2.1]{joshi1997intrinsic} for this strategy)
yields
\begin{align*}
a = &C(r,\theta,\sgn(\lambda)) + \int -\lambda^{-1} b+\frac{r}{2\lambda^2} \Lap \ta \, d\lambda\\
\equiv &C(r,\theta,\sgn(\lambda))+e(r,\theta,\sgn\lambda).
\end{align*}
Here the term $C(r,\theta,\sgn(\lambda))$ corresponds to the homogeneous term
in $a,$ while the integral term $e(r,\theta,\lambda)$ is a remainder
that lies in $S^{-1+\epsilon}$, which corresponds to $r_{m-1}$, and satisfies
effective estimates directly following from the above two estimates \eqref{est1} and \eqref{est2}. 
\end{proof}


We finally state the propagation of one-step polyhomogeneity as a corollary of our previous results, which we summarized as Theorem \ref{thm1.2} in the introduction: 

\begin{corollary}[Propagation of One-Step Polyhomogeneity]
\label{propa_phg}
Suppose that $\Box u=0$ and that for $t<0,$ $u \in I_{\phg}^{m-(n-1)/4}(\RR\times X; N^* \{t+r=0\}).$ Then for $t>0,$ $u \in I_{\phg}^{m-(n-1)/4}(\RR\times X; N^* \{t-r=0\})$ and has an oscillatory integral representation of the form \eqref{oscillatoryintegral} where the symbol $a$ is polyhomogeneous. 
\end{corollary}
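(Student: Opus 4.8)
The plan is to bootstrap the leading-order statement (Proposition~\ref{proposition:phg}) up to the full asymptotic expansion by exploiting the fact that the operators used to \emph{detect} polyhomogeneity in Lemma~\ref{lemma:fullphg} are themselves compatible with the wave equation. The single observation that drives everything is the commutator relation \eqref{conformalcommutator}: since $[\Box,R]=-2i\Box$, we have $\Box(R-i\alpha)u=(R-2i-i\alpha)\Box u=0$ whenever $\Box u=0$. Hence for every choice of shifts the iterated quantity
$$
v_k:=\prod_{j=1}^k(R-i\alpha_j)\,u
$$
is again a solution of the wave equation, and this is precisely the property that lets us carry each $v_k$ across the cone tip.

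First I would record, for $t<0$, the conormal regularity of all the $v_k$. Since by hypothesis $u$ has a polyhomogeneous symbol on $t<0$, the forward direction of Lemma~\ref{lemma:fullphg} gives that $v_k$ is conormal to $\{t+r=0\}$ with $v_k\in I\D_{-m-1/2+k-0}$ for every $k$; equivalently, $v_k$ is a conormal distribution whose symbol has order $m-k$. Next I would propagate each $v_k$ to positive time. Because $\Box v_k=0$ and $v_k$ is conormal to $\{t+r=0\}$ of order $m-k$ for $t<0$, Proposition~\ref{proposition:conormal} applies verbatim and shows that for $t>0$ the distribution $v_k$ is conormal to $\{t-r=0\}$ of order $(m-k)+0$. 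By the equivalence of the two notions of conormality (and the description of conormal distributions through the spaces $L^2(\RR_t;\D_s)$) this yields, for $t>0$,
$$
v_j=\prod_{l=1}^j(R-i\alpha_l)\,u\in L^2(\RR_t;\D_{-m-1/2+j-0}),\qquad 1\le j\le k.
$$
These are exactly the hypotheses of the converse direction of Lemma~\ref{lemma:fullphg}. Applying that converse for each fixed $k$ produces, on $t>0$, a decomposition $a=a_m+a_{m-1}+\cdots+a_{m-k+1}+r_{m-k}$ with each $a_{m-i}$ homogeneous of its stated degree and $r_{m-k}\in S^{m-k+0}(X;\RR_{\lambda})$. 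Since $k$ is arbitrary and the finite expansions are mutually consistent, letting $k\to\infty$ shows that the symbol $a$ of $u$ on $t>0$ is polyhomogeneous of order $m$, which is the assertion of the corollary.

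The step I expect to require the most care is the bookkeeping of the $\epsilon$-losses, denoted throughout by the ``$-0$'' notation. Each application of Proposition~\ref{proposition:conormal} and each use of the equivalence $I\D_{-p-1/2}\subseteq I^{p-(n-1)/4}\subseteq I\D_{-p-1/2-\epsilon}$ costs an arbitrarily small amount of regularity, and the converse half of Lemma~\ref{lemma:fullphg} requires the tested regularities for all $1\le j\le k$ \emph{simultaneously}. The key point making this harmless is that for each fixed $k$ only finitely many such steps intervene, so the accumulated loss stays within an $\epsilon$ that Lemma~\ref{interpolation} can absorb; polyhomogeneity is then an ``all $k$'' conclusion assembled from these finite-stage statements.

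It is worth emphasizing that the propagation of conormality alone (Proposition~\ref{proposition:conormal}) could never give this result, since conormality does not retain the asymptotic expansion. It is the compatibility of the detecting operators $R-i\alpha_j$ with $\Box$ that transports the polyhomogeneous structure, and this is exactly where the product-cone hypothesis enters: the clean commutator $[\Box,R]=-2i\Box$ is what guarantees that each $v_k$ remains a genuine solution of the wave equation, in contrast to the general non-product case where the error term in $[\Box,R]=-2i\Box+E$ obstructs precisely this step.
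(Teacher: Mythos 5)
Your proposal is correct and follows essentially the same route as the paper: apply the forward direction of Lemma \ref{lemma:fullphg} for $t<0$ to get conormality of each $\prod_{j=1}^k(R-i\alpha_j)u$, propagate via Proposition \ref{proposition:conormal} together with the commutator relations \eqref{conformalcommutator}, and then invoke the converse direction of Lemma \ref{lemma:fullphg} via the equivalence between conormal order and iterative regularity. Your additional remarks on the $\epsilon$-bookkeeping and on why the clean commutator is essential are consistent with (and slightly more explicit than) the paper's argument.
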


\begin{proof}
We know 
$$\prod_{j=1}^k(R-i\alpha_j) u \in  I^{m-k-(n-1)/4}(\RR\times X; N^* \{t+r=0\}) $$with $\alpha_j=\frac{n+1}{2}+(m-j+1)$ as in Lemma \ref{lemma:fullphg} for $t<0$ and all $k\in\mathbf{N}$. By Proposition \ref{proposition:conormal} and the commutator relations \eqref{conformalcommutator},  
$$\prod_{j=1}^k(R-i\alpha_j) u \in  I^{m-k-(n-1)/4+0}(\RR\times X; N^* \{t-r=0\}) $$ for $t>0$ and all $k\in\mathbf{N}$. Thus by Lemma \ref{lemma:fullphg} and the fact that conormal distribution $I^{m-k-(n-1)/4+0}(\RR\times X; N^* \{t-r=0\})$ corresponds to iterative regularity $I\D_{-m-1/2+k-0}$, we conclude $u$ is a polyhomogeneous distribution. 
\end{proof}

\begin{remark}
This corollary in particular shows that the diffractive wave enjoys one-step polyhomegeneity, which improves the result of half-step polyhomogeneity given implicitly in Cheeger-Taylor \cite{cheeger1982diffraction} \cite[Theorem 5.1, 5.3]{cheeger1982diffraction2}. This half-step polyhomogeneity is further explicitly pointed out by Ford and Wunsch in \cite[Proof of Proposition 2.1]{ford2017diffractive}.
\end{remark}

\section{Diffraction Coefficient on Product Cones}

Assume $t>r'$. The half wave propagator can be decomposed as 
$$U(t)=U_G(t)+U_D(t),$$
where the first part is the geometric wave propagator and the second is the diffractive wave propagator with 
$$\text{WF}\ U(t)\backslash \text{WF}\ U_D(t) \subset \left\{ \text{bicharacteristics of the geometric geodesics} \right\}.$$
On the diffracted front $\mathscr{D}$ and away from the geometric front $\mathscr{G}$ (see Figure \ref{pic2}), we can write the kernel of the diffractive half wave propagator as 
\begin{equation}
\label{WK}
U_D(t)= (2\pi)^{-\frac{n+1}{2}}\int e^{i(r+r'-t)\cdot\lambda} K(r,\theta; r',\theta';\lambda)d\lambda |r^{n-1}drd\theta r'^{n-1}dr'd\theta'|^{\frac{1}{2}}
\end{equation}
where $K(r,\theta; r',\theta';\lambda)$ is a polyhomogeneous symbol of order $0$, and $U_D(t)$ is a conormal distribution to $\{r+r'=t\}$. Here we confuse the propagator with its Schwartz kernel and the assumption $t>r'$ is to ensure the existence of the diffractive front. This expression is due to Cheeger-Taylor\cite{cheeger1982diffraction}\cite{cheeger1982diffraction2} and Melrose-Wunsch \cite{Melrose-Wunsch1}, and it can be seen as a consequence of conormality of the diffractive wave.  We show later in this section that the symbol of the diffractive half wave kernel has the form: 
\begin{equation}
\label{diff_symbol}
 K(r,\theta; r',\theta';\lambda)\equiv\sum_j \sigma^T_j(t,r,r';\lambda)\varphi_j(\theta)\varphi_j(\theta')\ \mod S^{-\infty}
\end{equation}
where $\varphi_j$ is the $j$-th Fourier mode of $\Lap_{\theta}$ on $N$ and $\sigma^T_j(t,r,r';\lambda)$ is the total symbol of $j$-th mode of diffractive fundamental solution $E_j(t,r,r')$. We define  $\sigma^P_j(t,r,r';\lambda)$ to be the principal symbol of $\sigma^T_j$. And the \emph{diffraction coefficient} is defined to be 
$$K_0(r,\theta; r',\theta'):=\sum_j\sigma^P_j(t,r,r';\lambda)\varphi_j(\theta)\varphi_j(\theta').$$

The idea of this section is the following. We first consider diffractions of spherical waves. By \cite[Theorem 4.8]{Melrose-Wunsch1}, the spherical diffractive wave is cornormal to $\{r+r'=t\}$ for $t>r'$. Then we consider a mode-by-mode decomposition of the diffractive fundamental solution, and shows that the principal symbol of the diffractive half wave kernel is given by the sum of the principal symbol of the diffractive wave of each Fourier mode. This reduces the computation of the diffraction coefficient to each mode.

The construction is based on the functional calculus \cite{taylor2013partial} on product cones.  We first consider the exact solution (on a single mode) to the half wave equation. Recall the Laplacian on a product cone is
$$\Delta=D_r^2-i\frac{n-1}{r}D_r+\frac{1}{r^2}\Delta_{\theta}$$ and we define $\nu_j:=\sqrt{\mu_j^2+\alpha^2}$ with $\alpha=-\frac{n-2}{2}$, where $\mu_j^2, \varphi_j$ denote eigenvalues and eigenfunctions of $\Delta_{\theta}$. If we take 
$$g_j(r)=r^{-\frac{n-2}{2}}J_{\nu_j}(\lambda r),$$
then 
$$\Delta(g_j\varphi_j)=\lambda^2(g_j\varphi_j).$$
This can be seen by reducing $(\Lap-\lambda^2)(g_j\varphi_j)=0$ to a Bessel equation by change of variables. For a detailed treatment on this solution on a single Fourier mode, we refer to \cite{baskin2019scattering}. 
By the functional calculus on product cones,
\begin{equation}
\label{Function_Calc1}
f(\Delta) g(r,\theta)=r^{\alpha}\sum_j\left(\int_0^{\infty}f(\lambda^2)J_{\nu_j}(\lambda r)\lambda \Big(\int_0^{\infty}s^{1-\alpha} J_{\nu_j}(\lambda s)g_j(s)ds\Big)d\lambda\right)\varphi_j(\theta)
\end{equation}
for $g(r,\theta)=\sum_j g_j(r) \varphi_j(\theta)\in L^2(C(N); \mathbb{C})$. We define the operator $\nu$ on $N$ by
\begin{equation}
\nonumber
\nu=(\Lap_{\theta}+\alpha^2)^{\frac{1}{2}};
\end{equation}
the kernel of $f(\Lap)$ is thus a function on $\RR_+\times\RR_+$ taking values in operators on $N$, by the formula 
\begin{equation}
\label{Function_Calc2}
f(\Lap)= (rr')^{\alpha}\int^{\infty}_0 f(\lambda^2)J_{\nu}(\lambda r) J_{\nu}(\lambda r')\lambda d\lambda.
\end{equation}
Using \eqref{Function_Calc1}, we take $g(r,x)$ to be one single mode of spherical wave $\delta(r-r')\varphi_j(\theta)$ and the operator as half wave operator $U(t)=e^{-it\sqrt{\Delta}}$. Then 
$$
U(t)\big(\delta(r-r')\varphi_j\big)=r^{\alpha} \left( \int e^{-it\lambda}J_{\nu_j}(\lambda r) H(\lambda) \lambda \Big( H(r') {r'}^{1-\alpha} J_{\nu_j}(\lambda r') \Big)d\lambda \right)\varphi_j(\theta)
$$
for fixed $r'>0$, where $H$ is the Heaviside function. 

We define $\tilde{\chi}_{\nu_j}(\lambda):=\rho(\lambda) \lambda \Big(  \tilde{\rho}(r') {r'}^{1-\alpha} J_{\nu_j}(\lambda r') \Big)$ with  $\rho(\lambda)$ and $\tilde{\rho}(\lambda)$ being smooth cutoffs away from $0$ and equal to $1$ for $\lambda>1$. Thus the solution should have the form 
$$
u_{\nu_j}(t,r,r',\theta)\equiv r^{\alpha} \left( \int e^{-it\lambda}J_{\nu_j}(\lambda r)\tilde{\chi}_{\nu_j}(\lambda) d\lambda \right)\varphi_j(\theta) \mod\CI
$$
for fixed $r'>0$. Note that the solution also has singularities at $\{r=r'+t\}$ apart from the diffractive singularities. 

From the previous discussion we have
\begin{equation}
\nonumber
U(t)\big(\delta(r-r') \varphi_j(\theta)\big)\equiv \int E_D(t,r,r',\theta,\theta')\varphi_j(\theta')d\theta' =E_{j}(t,r,r') \varphi_j(\theta),
\end{equation}
modulo the singularities at $\{r=r'+t\}$ for the first equation. Now we compute the diffractive fundamental solution $E_D(t,r,r',\theta, \theta')$. We first regularize it by averaging it angularly to instead study 
$$
u_\varphi(t,r,r',\theta)=\int E_D(t,r,\theta,r',\theta') \varphi(\theta') \, d\theta'
$$
for an arbitrary $\varphi \in \mathcal{C}^{\infty}_c(N)$ supported close to a single point $\theta_0\in N.$ Using the Plancherel theorem, Fourier expanding $\varphi$ in $N$,  i.e., taking the eigenfunction expansion gives
$$u_\varphi=\sum_j c_{\nu_j}  E_{j}(t,r,r') \varphi_j(\theta)$$ with $c_{\nu_j}=\langle \varphi, \varphi_j\rangle_{L^2}$ the corresponding Fourier coefficient. 


Now what we can compute by the asymptotic expansion of Bessel functions is the principal symbol of the conormal solution $u_{\nu_j}$ at $N^*\{r+r'=t\}$ (as we will do in the later part of this section); for now we write these principal symbols 
$\sigma_{j}^P(t,r,r',\lambda)\varphi_j(\theta).$ Formally, the principal symbol of $u_{\varphi}$ is the sum of the principal symbols of $u_{\nu_j}$, though we have to be careful to show that the subprincipal symbols of $u_{\nu_j}$ will not add up and contribute to the principal symbol of $u_{\varphi}$. This leads to the following theorem: 

\begin{theorem}[Principal Symbols of Diffraction]
\label{Diffraction_Symbol}
The principal symbol $\sigma^P$ of $u_\varphi$ is equal to the sum of principal symbols of mode-by-mode solutions:
\begin{equation}
\sigma^P(u_\varphi) = \sum_j c_{\nu_j} \sigma^P_{j}(t,r,r',\lambda) \varphi_j(\theta),
\end{equation}
i.e., the subprincipal symbols of $u_{\nu_j}$ won't add up and contribute to principal symbol of $u_{\varphi}$. 
\end{theorem}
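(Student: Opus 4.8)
The plan is to reduce the statement to an absolutely convergent sum of remainder symbols, exploiting the uniformity of the effective estimates across modes together with the rapid decay of the Fourier coefficients $c_{\nu_j}$. Fix $\epsilon\in(0,1)$. For each $j$ the mode solution satisfies $\Box u_{\nu_j}=0$ and, for $t<0$, is conormal to $\{t+r=0\}$ with polyhomogeneous symbol, so by Proposition~\ref{proposition:phg} its symbol $a_{\nu_j}$ for $t>0$ splits as $a_{\nu_j}=a_{\nu_j}^P+r_{\nu_j}$, where $a_{\nu_j}^P$ is homogeneous of degree $m$ (producing $\sigma_j^P(t,r,r',\lambda)\varphi_j(\theta)$) and $r_{\nu_j}\in S^{m-1+\epsilon}(X;\RR_\lambda)$. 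Since $u_\varphi=\sum_j c_{\nu_j}E_j(t,r,r')\varphi_j(\theta)$ with each $E_j\varphi_j=u_{\nu_j}$ modulo $\CI$, it suffices to show that $\sum_j c_{\nu_j}r_{\nu_j}$ defines a genuine symbol of order $m-1+\epsilon$; being strictly of order below $m$, it then contributes nothing to the principal symbol, and the conclusion follows with $\sigma^P(u_\varphi)=\sum_j c_{\nu_j}a_{\nu_j}^P$.

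The two quantitative inputs are, first, that the remainder seminorms are controlled by the incoming data with constants independent of $j$, and second, that the incoming seminorms grow at most polynomially in $\nu_j$. For the first, I would invoke the second part of Proposition~\ref{proposition:phg}, equivalently the effective estimates of Proposition~\ref{proposition:conormal}: for each $M$ there exist $M'$ and $C$, depending only on $\epsilon$, $M$ and the fixed time intervals but \emph{not} on the individual solution, such that $\|r_{\nu_j}\|_{S^{m-1+\epsilon}_M}\le C\,\|a_{\nu_j}\rvert_{t<0}\|_{S^m_{M'}}$. For the second, I would use the explicit incoming mode data $r^\alpha J_{\nu_j}(\lambda r)$ together with the large-argument asymptotics of the Bessel functions, whose expansion coefficients are polynomials in $\nu_j$; this yields $\|a_{\nu_j}\rvert_{t<0}\|_{S^m_{M'}}\le C_M(1+\nu_j)^{K_M}$ for a fixed exponent $K_M$. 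Combining the two gives $\|r_{\nu_j}\|_{S^{m-1+\epsilon}_M}\le C_M(1+\nu_j)^{K_M}$.

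Finally, I would close the argument using the smoothness of $\varphi$. Since $\varphi\in\CI_c(N)$, integration by parts against $\Delta_\theta$ shows $\sum_j|c_{\nu_j}|^2\mu_j^{4N}=\|\Delta_\theta^N\varphi\|_{L^2}^2<\infty$ for every $N$; combined with Weyl's law $\#\{j:\mu_j\le\Lambda\}\lesssim\Lambda^{n-1}$ and Cauchy--Schwarz, this gives $\sum_j|c_{\nu_j}|(1+\nu_j)^{K}<\infty$ for every $K$. Hence for each fixed $M$ the series $\sum_j c_{\nu_j}r_{\nu_j}$ converges absolutely in the seminorm $\|\cdot\|_{S^{m-1+\epsilon}_M}$, so the sum lies in $S^{m-1+\epsilon}(X;\RR_\lambda)$; the same decay argument shows $\sum_j c_{\nu_j}a_{\nu_j}^P$ is a legitimate symbol of order $m$, which identifies $\sigma^P(u_\varphi)$ as claimed.

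The main obstacle will be the second quantitative input: establishing the uniform polynomial-in-$\nu_j$ control of the $t<0$ symbol seminorms, and in particular checking that the growth exponent $K_M$ is finite for each fixed $M$ (it may increase with $M$, which is harmless since $c_{\nu_j}$ decays faster than any polynomial). This rests on a careful use of the uniform asymptotics of $J_{\nu_j}$ and on confirming that the constants $C,M'$ produced by the propagation estimates are genuinely independent of the mode.
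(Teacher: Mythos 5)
Your proposal follows essentially the same route as the paper's proof: both reduce the claim to convergence of the remainder series $\sum_j c_{\nu_j}(\sigma^T_j-\sigma^P_j)\varphi_j$ in the $S^{m-1+\epsilon}$ topology, control each remainder seminorm via the effective (mode-independent) estimates of Proposition~\ref{proposition:phg}, observe that the incoming symbol seminorms grow at most polynomially in $\nu_j$, and close with the rapid decay of $c_{\nu_j}$. Your version merely makes explicit the Weyl-law/Cauchy--Schwarz step and correctly identifies the uniform polynomial bound on the incoming data as the point requiring care; this matches the paper's argument.
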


\begin{proof}
The convergence of this sum is due to the fact that $c_{\nu_j}$ are rapidly decaying with respect to $\nu_j$ as $j\rightarrow\infty$, which comes from the fact that $\varphi$ is $\mathcal{C}^{\infty}$ so the Fourier coefficient rapidly decays; we can check that
the series of principal symbols converges.  However, we need information about the growth rate of the symbol \emph{remainders}: the equality of conormal distributions tells us that really 
$$\sigma^T(u_\varphi) = \sum_j c_{\nu_j} \sigma^T_{j}(t,r,r',\lambda) \varphi_j(\theta),$$
where we use $\sigma^T$ to denote the total symbol, i.e., the full amplitude of the conormal distribution with the canonical choice of phase function $\phi(t,r,\lambda)=(t\pm r)\lambda.$ Thus, it remains to check that the sum of remainder terms
\begin{equation}\label{subprincipalseries}
 \sum_j c_{\nu_j} (\sigma^T_j-\sigma^P_{j})\varphi_j(\theta)
\end{equation}
converges in the topology of symbols of order $m-1+\epsilon.$  By Proposition~\ref{proposition:phg} we find that any desired symbol semi-norm of $\sigma^T_j-\sigma_{m,j}$ is bounded by some symbol semi-norm of the symbol of solution $u_{\nu_j}$ for $t\ll 0.$ Examination of the initial data shows that each of these norms grows at most \emph{polynomially} in $\nu_j$ (with the growth arising from $\theta$ derivatives). Thus, since $c_{\nu_j}$ decays rapidly, the series \eqref{subprincipalseries} does indeed converge in every symbol semi-norm with respect to the $S^{m-1+\ep}$ topology, and the subprincipal terms cannot affect the principal symbol of the sum.\footnote{It certainly \emph{can} happen that lower-order terms in a sum affect the principal symbol of the result: consider $\delta=\sum_k e^{ik\theta}$ on the circle, and regard the RHS as a sum of conormal distributions with symbols of order $0$, which happen to have vanishing principal symbol. Of course the problem is that the symbol remainder terms grow nastily in $k$. }
\end{proof}

Following Theorem \ref{Diffraction_Symbol}, we now construct the principal symbol of the diffractive fundamental solution. We fix $\chi\in \CI(N)$ equal to $1$ near $\theta_0$ and use the above results for all $\varphi$ supported on $\{\chi=1\}.$
We have established that $$\sigma^P(u_{\varphi})=\sum_j c_{\nu_j} \sigma_j^P(t,r,r',\lambda) \varphi_j(\theta).$$  
Now let $\varphi$ approach $\delta(\theta')$ in the sense of distribution (with $\theta'$ not geometrically related to $\theta$), so that its Fourier coefficients $c_{\nu_j}$ approach $\varphi_j(\theta').$ We then obtain in the limit, in a neighborhood of any pair $\theta$ and $\theta'$ that are related by strictly diffractively geodesics, i.e., for $\theta, \theta'$ with $d_{h}(\theta, \theta')\neq\pi$, 
$$
\sigma^P(E_D(t,r,r',\theta,\theta'))  = \sum \sigma^P_{j}(t,r,r', \lambda)\varphi_j (\theta)\varphi_j(\theta'),
$$
as desired. 

In order to get the diffraction coefficient, now it remains to compute the principal symbol of the diffactive fundamental solution for each mode: $\sigma^P_{j}(t,r,r', \lambda)$. We employ the functional calculus on product cones and the conormality of diffractive waves. 

We consider the kernel of the half wave propagator given by \eqref{Function_Calc2}. Again here we use the Hankel asymptotics to get the diffractive coefficient. Consider \eqref{Function_Calc2} acting on a single mode $\varphi_j(\theta)$:
\begin{equation}
\label{wave_mode}
E_j(t,r,r')\varphi_j(\theta)\equiv (rr')^{\alpha}\Bigg(\int^{\infty}_0 e^{-it\lambda}J_{\nu_j}(\lambda r) J_{\nu_j}(\lambda r')\lambda d\lambda \Bigg) \varphi_j(\theta) 
\end{equation} 
modulo the singularities of the right hand side at $N^*\{r=r'+t\}$.

Note that for positive $\nu$ and $z$, the Bessel function $J_{\nu}(z)$ is the real part of Hankel function $H^{(1)}_{\nu}(z)$. Thus using the asymptotic formulas of Hankel functions from \cite[10.17.5]{NIST:DLMF}, we can extract the leading part of $J_{\nu_j}(\lambda r)$ as the principal symbol with phase variable $\lambda$:
\begin{equation}
\label{Bessel_Asym}
\begin{split}
J_{\nu_j}(\lambda r)
& \equiv \left(\frac{1}{2\pi \lambda r}\right)^{\frac{1}{2}} \left(e^{i(\lambda r-\frac{\nu_j\pi}{2}-\frac{\pi}{4})}+ e^{-i(\lambda r-\frac{\nu_j\pi}{2}-\frac{\pi}{4})} \right)\ \mod S^{-\frac{3}{2}+0}
\end{split}
\end{equation}
We now combine this Bessel asymptotics together with \eqref{wave_mode} to get the diffractive principal symbol. Thus, 
\begin{equation}
\nonumber
\begin{split}
E_j(t,r,r')\varphi_j(\theta)
 &\equiv (rr')^{\alpha}\Bigg(\int \rho(\lambda) e^{-it\lambda}J_{\nu_j}(\lambda r) J_{\nu_j}(\lambda r')\lambda d\lambda \Bigg) \varphi_j(\theta)\ \mod \CI\\
 &\equiv \frac{1}{2\pi}(rr')^{\alpha-\frac{1}{2}}\Bigg(\int e^{i\lambda(r+r'-t)} e^{-i(\nu_j\pi+\frac{\pi}{2})} d\lambda\Bigg)\varphi_j(\theta) 
\end{split}
\end{equation} 
modulo singularities at the conormal bundle $N^*\{r=r'+t\}$ and the lower order singularities at the conormal bundle $N^*\{r+r'=t\}$. The second equality is due to the fact that diffractive wave is conormal to $\{r+r'=t\}$ \cite[Theorem 4.8]{Melrose-Wunsch1}, so the only part in $J_{\nu_j}(\lambda r)$ and $J_{\nu_j}(\lambda r')$ that contributes to the diffractive principal symbol is each of their first terms in the asymptotic expansion \eqref{Bessel_Asym}, and the remaining terms are smooth near $N^*\{r+r'=t\}$, hence will not contribute to the diffractive wave. Now, comparing the above equation with the general formula for the diffractive half wave kernel \eqref{WK}, we have the diffraction coefficient: 
\begin{equation}
\label{Diffraction_Coefficient}
K_0(r,\theta; r',\theta')= -\frac{i}{2\pi}(rr')^{-\frac{n-1}{2}} e^{-i\nu\pi},
\end{equation}
where $\nu=\sqrt{\Lap_{\theta}+\left(\frac{n-2}{2}\right)^2}$.

\section{Scattering Matrix on Product Cones}
Consider the leading order behaviors of the solutions of
$$\left(\Lap-\lambda^2\right)u=0$$
under the asymptotic condition:
\begin{equation}
\label{u_asym}
u\sim a_+(\theta)r^{-\frac{n-1}{2}}e^{i\lambda r}+a_-(\theta)r^{-\frac{n-1}{2}}e^{-i\lambda r}+\mathcal{O}(r^{-\frac{n+1}{2}})\ \text{ as }r\rightarrow\infty,
\end{equation}
where $a_-/a_+$ is called the incoming/outgoing coefficient. Then $a_+(\theta)$ is uniquely determined by $a_-(\theta)$ and the scattering matrix $S(\lambda)$ is the unitary map from $a_-(\theta)$ to $a_+(\theta)$ for $\lambda\in\RR\backslash\{0\}$. This property is known for smooth asymptotically Euclidean manifolds \cite{melrose1994spectral}, and we show below in Proposition \ref{SM_def} it is also true on product cones. Meanwhile, we show that the scattering matrix on a product cone is
\begin{equation}
\nonumber
S(\lambda)= -i e^{-i\pi\nu}
\end{equation}
where $\nu=\sqrt{\Lap_{\theta}+\left(\frac{n-2}{2}\right)^2}$, which is related to the diffraction coefficient \eqref{Diffraction_Coefficient} as
\begin{equation}
\nonumber
S(\lambda)= 2\pi (rr')^{\frac{n-1}{2}} K_0(r,\theta;r',\theta'). 
\end{equation}
Here we should note that we only consider the smooth part of the scattering matrix. By \cite{melrose1996scattering}, on smooth asymptotically Euclidean manifolds (smooth manifolds with large conical ends), the scattering matrix is a Fourier integral operator with the canonical relation given by geodesic flow at time $\pi$. In the previous section, we found the diffraction coefficient of the points on product cones which are strictly diffractively related. This corresponds to the smooth part of the scattering matrix, i.e. $S(\lambda,\theta,\theta')$ for $d_h(\theta,\theta')\neq\pi$, where $S(\lambda,\theta,\theta')$ is the kernel of the scattering matrix. From now on we use the name \emph{scattering matrix} without saying that it means the smooth part. 

We define the scattering matrix through the following proposition: 
\begin{proposition}[The Scattering Matrix on Product Cones]
\label{SM_def}
The \emph{scattering matrix} $S(\lambda)$ on the product cone $C(N)$ for $\lambda\in\RR\backslash\{0\}$ is a unitary operator: 
\begin{equation}
\nonumber
\begin{split}
S(\lambda): \CI(N)&\longrightarrow \CI(N) \\
           a_{-}(\theta) &\mapsto a_{+}(\theta)
\end{split}
\end{equation}
where $a_+(\theta)$ is the \emph{outgoing} coefficient in the asymptotic expansion \eqref{u_asym} and it is uniquely determined by the \emph{incoming} coefficient $a_-(\theta)$. Moreover, the scattering matrix on a product cone takes the form:
\begin{equation}
\nonumber
    S(\lambda)= -i e^{-i\pi\nu}
\end{equation}
where $\nu=\sqrt{\Lap_{\theta}+\left(\frac{n-2}{2}\right)^2}$. 
\end{proposition}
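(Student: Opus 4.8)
The plan is to reduce the problem to a single Fourier mode on $N$, solve the resulting radial ODE explicitly in terms of Bessel functions, and then reassemble the modes using the functional calculus of Section 3. Expanding in eigenfunctions $\varphi_j$ of $\Lap_{\theta}$ with $\Lap_{\theta}\varphi_j=\mu_j^2\varphi_j$ and writing $u=\sum_j g_j(r)\varphi_j(\theta)$, the equation $(\Lap-\lambda^2)u=0$ decouples into the radial equations
$$
g_j''+\frac{n-1}{r}g_j'+\left(\lambda^2-\frac{\mu_j^2}{r^2}\right)g_j=0.
$$
As already observed before \eqref{Function_Calc1}, the substitution $g_j=r^{-\frac{n-2}{2}}f_j(\lambda r)$ turns this into Bessel's equation of order $\nu_j=\sqrt{\mu_j^2+\left(\frac{n-2}{2}\right)^2}$, so the general solution is $g_j=r^{-\frac{n-2}{2}}\bigl(A_j H^{(1)}_{\nu_j}(\lambda r)+B_j H^{(2)}_{\nu_j}(\lambda r)\bigr)$.

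Next I would impose the boundary condition at the cone point dictated by the Friedrichs extension: of the two branches $J_{\nu_j}$ and $Y_{\nu_j}$, only $J_{\nu_j}$ lies in the form domain near $r=0$. For $\nu_j\ge 1$ this is forced by $L^2_g$-integrability against $r^{n-1}\,dr$, while for $0<\nu_j<1$ it is the selection made by the Friedrichs quadratic form, consistent with the domain characterization Proposition. Since $J_{\nu_j}=\tfrac12\bigl(H^{(1)}_{\nu_j}+H^{(2)}_{\nu_j}\bigr)$, admissibility forces $A_j=B_j$. I then read off the large-$r$ behavior using the Hankel asymptotics of the type in \eqref{Bessel_Asym} (from \cite[10.17.5]{NIST:DLMF}) together with $r^{-\frac{n-2}{2}}\cdot r^{-1/2}=r^{-\frac{n-1}{2}}$, giving
$$
g_j\sim\sqrt{\tfrac{2}{\pi\lambda}}\,r^{-\frac{n-1}{2}}\left(A_j e^{-i(\frac{\nu_j\pi}{2}+\frac{\pi}{4})}e^{i\lambda r}+B_j e^{i(\frac{\nu_j\pi}{2}+\frac{\pi}{4})}e^{-i\lambda r}\right).
$$
Hence the outgoing and incoming coefficients on the $j$-th mode satisfy $a_{+,j}/a_{-,j}=(A_j/B_j)\,e^{-i(\nu_j\pi+\frac{\pi}{2})}=-i\,e^{-i\pi\nu_j}$ once $A_j=B_j$. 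This simultaneously shows that $a_+$ is uniquely determined mode-by-mode by $a_-$, and identifies the action of $S(\lambda)$ on $\varphi_j$ as multiplication by $-i\,e^{-i\pi\nu_j}$, which is precisely $-i\,e^{-i\pi\nu}$ for $\nu=(\Lap_{\theta}+\alpha^2)^{1/2}$ in the functional calculus. Unitarity is then immediate, since $S(\lambda)$ is diagonal in the eigenbasis with unimodular entries $\lvert -i\,e^{-i\pi\nu_j}\rvert=1$.

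I expect the delicate point to be the analytic bookkeeping of the reassembly rather than the per-mode computation. One must justify that summing the modal solutions yields an honest solution obeying \eqref{u_asym} with a genuinely smooth remainder of size $\mathcal{O}(r^{-\frac{n+1}{2}})$ uniformly, and that $a_\pm\in\CI(N)$. The subtlety is that the Hankel asymptotics hold for fixed order and large argument, whereas here $\nu_j\to\infty$ as $j\to\infty$, so I would need to control the joint large-$\nu_j$, large-$\lambda r$ regime and verify that the error terms decay fast enough in $j$ to converge in $\CI(N)$. As elsewhere in the paper this is managed by restricting to the smooth part, i.e. away from $d_h(\theta,\theta')=\pi$, where $e^{-i\pi\nu}$ has a smooth Schwartz kernel and the mode sum converges after pairing against test functions; the low-mode treatment of the Friedrichs condition is the one place that requires separate, explicit care.
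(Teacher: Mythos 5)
Your proposal is correct and follows essentially the same route as the paper: separation into eigenmodes of $\Lap_{\theta}$, reduction to Bessel's equation of order $\nu_j=\sqrt{\mu_j^2+(\tfrac{n-2}{2})^2}$, selection of the $J_{\nu_j}$ branch by the Friedrichs condition (forcing $A_j=B_j$), and reading off $a_{+,j}/a_{-,j}=-i e^{-i\pi\nu_j}$ from the Hankel asymptotics. Your explicit flagging of the $0<\nu_j<1$ case and of the uniformity of the large-argument asymptotics as $\nu_j\to\infty$ is a welcome refinement of points the paper treats only implicitly, but it does not change the argument.
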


\begin{proof}
Consider the homogeneous equation on $C(N)$
\begin{equation}
\label{homo_helm}
\left(\Delta - \lambda^2\right)u=0.
\end{equation}
By Section 2, this is
\begin{equation}
\nonumber
\left(\partial_r^2+\frac{n-1}{r}\partial_r-\frac{1}{r^2}\Delta_{\theta}+\lambda^2\right)u=0.
\end{equation}
Now we consider an eigenfunction decomposition of $u\in L^2(C(N))$ by eigenfunctions on $N$ of $\Delta_{\theta}$. i.e.,
\begin{equation*}
L^{2}(C(N); \mathbb{C}) = \bigoplus_{j=0}^{\infty}L^{2}(\RR_+; E_{j}), \quad u(r,\theta) = \sum_{j=0}^{\infty}v_{j}(r) \varphi_{j}(\theta),
\end{equation*}
where the first space is defined via the volume form induced by the conic metric, and the latter spaces can be identified with the space $L^2(\RR_+; r^{n-1}dr)$. $E_j$ denotes the $j$-th eigenspace.  

The equation \eqref{homo_helm} thus becomes
$$\sum_j \varphi_j(\theta) \cdot \left( \partial_r^2+\frac{n-1}{r}\partial_r+\lambda^2-\frac{\mu_j^2}{r^2}\right) v_j(r) =0,$$
where $\mu_j^2$ is the eigenvalue to $\Lap_{\theta}$ with the eigenfunction $\varphi_j$. We thus reduce the equation \eqref{homo_helm} to 
\begin{equation}
\label{red_Helm}
\left(\partial_r^2+\frac{n-1}{r}\partial_r+\left(\lambda^2-\frac{\mu_j^2}{r^2}\right)\right)v_j(r)=0
\end{equation}
for all $j$. By changing variable $\rho$ with $\rho=\lambda r$, we have
\begin{equation}
\nonumber
\left(\partial_{\rho}^2+\frac{n-1}{\rho}\partial_{\rho}+\left(1-\frac{\mu_j^2}{\rho^2}\right)\right)v_j(r)=0.
\end{equation}
Writing $v_j(r)=\rho^{\sigma}\omega_j(\rho)$, we can replace the previous equation by
\begin{equation}
\nonumber
\sigma(\sigma-1)\rho^{\sigma-2}\omega_j+2^{\sigma}\rho^{\sigma-1}\partial_{\rho}\omega_j+\rho^{\sigma}\partial^2_{\rho}\omega_j+\frac{n-1}{\rho}\left(\sigma\rho^{\sigma-1}\omega_j+\rho^{\sigma}\partial_{\rho}\omega_j\right)+\left(1-\frac{\mu_j^2}{\rho^2}\right)\rho^{\sigma}\omega_j=0.
\end{equation}
Rewrite it into the following form
\begin{equation}
\nonumber
\left(\partial^2_{\rho}+\frac{2\rho+(n-1)}{\rho}\partial_{\rho}+\frac{\sigma(\sigma-1)+(n-1)\sigma}{\rho^2}+\left(1-\frac{\mu_j^2}{\rho^2}\right)\right)\omega_j=0.
\end{equation}
Setting $2\sigma+(n-1)=1$, i.e., $\sigma=1-\frac{n}{2}$,
the equation above then becomes
\begin{equation}
\nonumber
\left(\partial^2_{\rho}+\frac{1}{\rho}\partial_{\rho}+\left(1-\frac{\mu_j^2+(1-n/2)^2}{\rho^2}\right)\right)\omega_j=0.
\end{equation}
This is a homogeneous Bessel equation
\begin{equation}
\label{homo_Bes}
\omega_j''+\frac{1}{\rho}\omega_j'+\left(1-\frac{\nu_j^2}{\rho^2}\right)\omega_j=0
\end{equation}
with $\nu_j=\sqrt{\mu_j^2+(1-n/2)^2}$. Its general solutions in terms of Bessel/Hankel functions are the linear combinations of 
$H^{(1)}_{\nu}(z)$ and $H_{\nu}^{(2)}(z)$. We can then construct solutions to \eqref{homo_helm} with the prescribed boundary condition using Bessel functions asymptotics as following. Consider the general solutions to \eqref{homo_Bes}
\begin{equation}
\nonumber
\omega_j=C_1H_{\nu_j}^{(1)}(\rho)+C_2H_{\nu_j}^{(2)}(\rho).
\end{equation}
Noticing that from the above change of variables, $v_j=\rho^{1-\frac{n}{2}}\omega_j(\rho)$ and $\rho=\lambda r$, we can obtain general solutions to \eqref{red_Helm}:
\begin{equation}
\nonumber
v_j=C_{1}\cdot(\lambda r)^{1-n/2}H_{\nu_j}^{(1)}(\lambda r)+C_{2}\cdot(\lambda r)^{1-n/2}H_{\nu_j}^{(2)}(\lambda r)
\end{equation}
and thus they have general solutions to \eqref{homo_helm}:
\begin{equation}
\label{u}
u=\sum_j \left(C_{1,j}\cdot(\lambda r)^{1-n/2}H_{\nu_j}^{(1)}(\lambda r)+C_{2,j}\cdot(\lambda r)^{1-n/2}H_{\nu_j}^{(2)}(\lambda r)\right) \varphi_j(\theta).
\end{equation}
By the asymptotic behaviors of Hankel functions, we notice that the solutions thus have the asymptotic expansion:
\begin{equation}
\label{asymp_helm}
u\sim a_+(\theta)r^{-\frac{n-1}{2}}e^{i\lambda r}+a_-(\theta)r^{-\frac{n-1}{2}}e^{-i\lambda r}+\mathcal{O}(r^{-\frac{n+1}{2}})\ \text{ as }r\rightarrow\infty,
\end{equation}
where the first term is called outgoing and the second incoming. 

As in the construction of the resolvent on the product cone \cite[Theorem 2.1]{baskin2019scattering}, we consider the boundary behaviors of Bessel functions. The choice of the Friedrichs extension requires that both $v_j$ and $v'_j$ lie in the weighted $L^2$ space near $0$. By the asymptotic formula of Bessel functions $J_{\nu}(z)$ and $Y_{\nu}(z)$ from \cite[10.7.3, 10.7.4]{NIST:DLMF}, the existence of $Y_{\nu}(z)$ fails the Friedrichs extension condition. And since
$$H_{\nu}^{(1)}(z)=J_{\nu}(z)+iY_{\nu}(z)\ \text{and}\  H_{\nu}^{(2)}(z)=J_{\nu}(z)-iY_{\nu}(z)$$
for $\nu>0$, the facts above and in particular the Friedrichs extension imply that in \eqref{u} the coefficients $C_{1,j}=C_{2,j}=C_j$. Thus $v_j$ must be a multiple of $r^{-(n-2)/2}J_{\nu_j}(\lambda r)$ near $r=0$. And the solution then becomes:
\begin{equation}
\label{u_1}
u=\sum_j 2C_j\cdot(\lambda r)^{1-n/2}J_{\nu_j}(\lambda r) \varphi_j(\theta).
\end{equation}
Again, from \cite[10.17.3]{NIST:DLMF} by asymptotics of $J_{\nu}(z)$:
\[J_{\nu}\left(z\right)\sim\left(\frac{2}{\pi z}\right)^{\frac{1}{2}}\*\left( \cos\omega\sum_{k=0}^{\infty}(-1)^{k}\frac{a_{2k}(\nu)}{z^{2k}}-\sin\omega\sum_{k=0}^{\infty}(-1)^{k}\frac{a_{2k+1}(\nu)}{z^{2k+1}}\right),\text{ as } z\rightarrow0\]
where $\omega=z-\tfrac{1}{2}\nu\pi-\tfrac{1}{4}\pi$ and $a_k(\nu)$ constants of $\nu$, we can extract the leading part of $u$ as 
$$u\sim \sum_j 2C_j\cdot\left(\lambda r\right)^{1-n/2}\left(\frac{1}{2\pi\lambda r}\right)^{\frac{1}{2}}\left(e^{-\frac{i\pi}{4}-\frac{i}{2}\pi\nu_j}e^{i\lambda r}+e^{\frac{i\pi}{4}+\frac{i}{2}\pi\nu_j}e^{-i\lambda r}\right)\varphi_j(\theta).$$
For each mode $\varphi_j(\theta)$, the scattering matrix $S(\lambda)$ acts like
$$a_{-,j}(\theta):=e^{\frac{i\pi}{4}+\frac{i}{2}\pi\nu_j}\varphi_j\mapsto\ a_{+,j}(\theta):=e^{-\frac{i\pi}{4}-\frac{i}{2}\pi\nu_j}\varphi_j.$$
Thus the scattering matrix acts diagonally on modes and takes the form: 
$$S(\lambda)=-ie^{-i\pi\nu}$$
where $\nu=\sqrt{\Lap_{\theta}+\left(\frac{n-2}{2}\right)^2}$. The unitarity follows directly from this expression.

Now we show the uniqueness of the scattering matrix. It suffices to show the uniqueness for each mode. For any $g(\theta)\in\CI(N)$, if there are two solutions $u_1$ and $u_2$ to
\begin{equation}
\label{homo_Bessel}
    \left(\Lap-\lambda^2\right)u=0
\end{equation}
with $g(\theta)$ the incoming boundary condition, then $u=u_1-u_2$ is an outgoing solution to the homogeneous stationary wave equation \eqref{homo_Bessel}. Thus, each mode component of $u$ needs to satisfy the outgoing condition and it takes the form $$H^{(1)}_{\nu_j}(\lambda r)=J_{\nu_j}(\lambda r)+iY_{\nu_j}(\lambda r),$$
since $H^{(1)}_{\nu_j}(\lambda r)$ gives the outgoing part in the asymptotic expansions \eqref{u_asym}. 
Then each mode component of $u$ has to be zero since $Y_{\nu_j}(\lambda r)$ does not lie in $L^2$ at $r=0$, otherwise it contradicts to the requirement of the Friedrichs extension. Hence $a_+(\theta)$ must be uniquely determined by $g=a_-(\theta)$.
\end{proof}

Combing the expression of the scattering matrix with the conclusion of the diffraction coefficient in the previous section, we have proved Theorem \ref{thm1.1} which we restate more precisely as follows: 
\begin{theorem*}
Away from the intersection of geometric wave front and diffractive front, i.e., for $d_h(\theta,\theta')\neq\pi$, the diffraction coefficient, which is the principal symbol of the diffractive half wave kernel,  $K_0(r, \theta; r', \theta')$ and the kernel of the scattering matrix $S(\lambda)$ satisfy the following relation:
\begin{equation}
\nonumber
K_0(r, \theta, r', \theta')= (2\pi)^{-1}(rr')^{-\frac{n-1}{2}}S(\lambda, \theta,\theta'), 
\end{equation}
where $S(\lambda, \theta,\theta')$ is the kernel of the scattering matrix. 
\end{theorem*}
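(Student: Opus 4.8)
The plan is to prove the stated identity by directly comparing the two explicit closed forms already obtained: the diffraction coefficient of Section 3 and the scattering matrix of Proposition~\ref{SM_def}. The conceptual point is that both objects are operators on $N$ manufactured from the single self-adjoint operator $\nu=\sqrt{\Lap_\theta+\left(\frac{n-2}{2}\right)^2}$ through the functional calculus, so the entire content of the theorem is that their Schwartz kernels coincide after the indicated rescaling in the radial variables.

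First I would recall from \eqref{Diffraction_Coefficient} that the diffraction coefficient is the kernel of the operator $-\frac{i}{2\pi}(rr')^{-\frac{n-1}{2}}e^{-i\pi\nu}$, and from Proposition~\ref{SM_def} that the scattering matrix is $S(\lambda)=-ie^{-i\pi\nu}$, the latter being independent of $\lambda$ because of the exact scaling symmetry of the product cone. Expanding both operators in the eigenbasis $\{\varphi_j\}$ of $\Lap_\theta$, each acts diagonally with the common eigenvalue $-ie^{-i\pi\nu_j}$, where $\nu_j=\sqrt{\mu_j^2+\left(\frac{n-2}{2}\right)^2}$, so that mode by mode
$$K_0(r,\theta;r',\theta')=\sum_j\left(-\frac{i}{2\pi}(rr')^{-\frac{n-1}{2}}e^{-i\pi\nu_j}\right)\varphi_j(\theta)\varphi_j(\theta'),\qquad S(\lambda,\theta,\theta')=\sum_j\left(-ie^{-i\pi\nu_j}\right)\varphi_j(\theta)\varphi_j(\theta').$$
The asserted relation $K_0=(2\pi)^{-1}(rr')^{-\frac{n-1}{2}}S(\lambda,\theta,\theta')$ then holds termwise, and hence as an equality of kernels.

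The one point demanding care beyond formal substitution is the legitimacy of identifying these two mode sums as distributions, and this is exactly where the hypothesis $d_h(\theta,\theta')\neq\pi$ is used: away from the geometric front the mode sum \eqref{diff_symbol} for the diffractive kernel converges in the conormal topology by Theorem~\ref{Diffraction_Symbol}, and the identical $\nu_j$-dependence controls the convergence of the kernel of $e^{-i\pi\nu}$ as an element of the functional calculus. The main (and essentially only) obstacle is therefore the bookkeeping of constants and powers: one must check that the principal symbol extracted from the Hankel asymptotics in Section 3, namely $\sigma^P_j\sim-\frac{i}{2\pi}(rr')^{\alpha-\frac12}e^{-i\pi\nu_j}$ attached to $N^*\{r+r'=t\}$, equals $(2\pi)^{-1}(rr')^{-\frac{n-1}{2}}$ times the $j$-th scattering eigenvalue $-ie^{-i\pi\nu_j}$ read off from the Bessel asymptotics in Proposition~\ref{SM_def}. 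Once one matches the two factors $e^{-i\pi\nu_j}$, identifies the constant $e^{-i\pi/2}=-i$, and observes that $\alpha-\frac12=-\frac{n-1}{2}$, the theorem follows at once by comparison with the general diffractive kernel form \eqref{WK}.
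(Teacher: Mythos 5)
Your proposal is correct and matches the paper's own argument: the paper proves this theorem simply by combining the explicit diffraction coefficient $K_0=-\tfrac{i}{2\pi}(rr')^{-\frac{n-1}{2}}e^{-i\pi\nu}$ from \eqref{Diffraction_Coefficient} with the scattering matrix $S(\lambda)=-ie^{-i\pi\nu}$ from Proposition~\ref{SM_def}, exactly as you do, with the mode-by-mode convergence away from $d_h(\theta,\theta')=\pi$ already secured by Theorem~\ref{Diffraction_Symbol}. Your extra bookkeeping check that $\alpha-\tfrac12=-\tfrac{n-1}{2}$ and $e^{-i\pi/2}=-i$ is consistent with the paper's computation.
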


\section{From the Radiation Field to the Scattering Matrix}

We now proceed to study the diffraction part of the radiation field of the wave equation on product cones. By using the notion of the radiation field, we can give an alternative proof of the Theorem \ref{thm1.1}. 

We use some ideas developed by Friedlander \cite{friedlander2001notes} and S{\'a} Barreto \cite{sa2003radiation} on the radiation field of asymptotically Euclidean manifolds. The radiation field on product cones is also studied by Baskin and Marzuola\cite{baskin2019radiation}.\footnote{In \cite{baskin2019radiation}, they studied the radiation field of the solution to the wave equation away from the cone point, while we only use the radiation field corresponding to the diffractive fundamental solution.} We define the radiation field through a theorem proved by Friedlander in \cite{friedlander2001notes} for smooth manifolds, and in the setting of product cones $X=C(N)$ by Baskin and Marzuola \cite{baskin2019radiation}: 

\begin{theorem}(The Radiation field)
\label{rf}
Let $f_0, f_1\in \CI_0(X)$ be smooth functions with compact support in X. If $u(t,r,\theta)\in\CI(\RR_+\times X)$ solves the wave equation with Cauchy data: 
\begin{equation}
\nonumber
\begin{cases}
& \Box u(t,r,\theta)=0 \ \text{on}\ \RR\times X\\ 
& u(0,r,\theta)=f_0,\ D_tu(0,r,\theta)=f_1,
\end{cases}
\end{equation}
where $(r,\theta)\in\RR_+\times N$, then there exist $w_k\in\CI(\RR\times N)$, such that 
\begin{equation}
\nonumber
r^{\frac{n-1}{2}}(Hu)(s+r,r,\theta)\sim\sum_{k=0}^{\infty}r^{-k}w_k(s,\theta), \text{as } r\rightarrow \infty
\end{equation}
where $H(t)$ denotes the Heaviside function. In particular, 
\begin{equation}
\nonumber
r^{\frac{n-1}{2}}(Hu)(s+r,r,\theta)\rvert_{r\rightarrow\infty}=w_0(s,\theta)
\end{equation}
is well defined, and it is called the \emph{radiation field} of $u$ as in \cite[Proposition 2]{friedlander2001notes}. 
\end{theorem}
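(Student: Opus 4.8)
The plan is to recast the statement as a boundary-regularity (one-step polyhomogeneity) result at null infinity for a one-dimensional wave equation, and then to generate the coefficients $w_k$ through a transport hierarchy. First I would remove the first-order radial term by setting $v=r^{(n-1)/2}u$. A direct computation converts $\Box u=0$ into the reduced equation
\begin{equation}
\nonumber
\left(\partial_t^2-\partial_r^2+\frac{\nu^2-\tfrac14}{r^2}\right)v=0,\qquad \nu^2=\Delta_\theta+\tfrac{(n-2)^2}{4},
\end{equation}
in which $\nu^2-\tfrac14$ enters as a fiberwise operator on $N$. Introducing the retarded variable $s=t-r$ and the boundary defining function $\rho=1/r$ of null infinity, and writing $V(s,r,\theta)=v(s+r,r,\theta)$, the operator becomes $2\partial_s\partial_r-\partial_r^2+\rho^2(\nu^2-\tfrac14)$. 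For $r$ large and $s$ fixed one has $H(s+r)=1$, so that $r^{(n-1)/2}(Hu)(s+r,r,\theta)=V(s,r,\theta)$, and the theorem is exactly the assertion that $V$ admits a Taylor (one-step polyhomogeneous) expansion $V\sim\sum_k \rho^k w_k(s,\theta)$ as $\rho\to0$, with leading coefficient $w_0$ the radiation field.

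Next I would extract the coefficients. Finite speed of propagation together with the compact support of $f_0,f_1$ guarantees that $V$ vanishes for $s$ below the arrival time of the wave, which supplies the initial conditions for the hierarchy. Substituting the formal series into the reduced equation and using $\partial_r=-\rho^2\partial_\rho$, the coefficient of $\rho^{m+1}$ yields, for $m\geq1$, the transport relation
\begin{equation}
\nonumber
2m\,\partial_s w_m=\bigl(\nu^2-\tfrac14-m(m-1)\bigr)w_{m-1},
\end{equation}
while the coefficient of $\rho^{1}$ imposes no constraint on $w_0$, confirming that $w_0$ is free data. Integrating in $s$ from $-\infty$ (the constants of integration being fixed to zero by the vanishing for $s\ll0$) determines each $w_m$, $m\geq1$, from $w_0$; since $\nu^2$ preserves smoothness on $N$, every $w_k$ lies in $\CI(\RR_s\times N)$.

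The analytic heart, which I expect to be the main obstacle, is to show that this formal series is a genuine asymptotic expansion with controlled remainders. I would pursue two complementary routes. The first is the Friedlander--S{\'a} Barreto energy method \cite{friedlander2001notes,sa2003radiation,baskin2019radiation}: commuting the reduced operator with the vector fields $\partial_s$, $\rho\partial_\rho$ and the tangential operators $Y_s$ of \eqref{TV}, all tangent to the compactification, and using conservation of the reduced energy, one bootstraps conormal and then polyhomogeneous regularity of $V$ up to $\{\rho=0\}$, so that the remainder after $N$ terms is $O(\rho^{N+1})$ in the relevant norms. The second route, better adapted to the product cone, decomposes $u=\sum_j v_j(r)\varphi_j(\theta)$; on each mode $v_j$ is an explicit combination of $r^{1-n/2}H^{(1)}_{\nu_j}(\lambda r)$ and $r^{1-n/2}H^{(2)}_{\nu_j}(\lambda r)$ as in Section 4, whose uniform Hankel asymptotics give the mode-by-mode expansion directly.

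The delicate point in the second route is the interchange of the mode summation with the limit $\rho\to0$ and the uniform-in-$s$ control of the remainder. This I would handle exactly as in Theorem \ref{Diffraction_Symbol}: the effective symbol estimates of Proposition \ref{proposition:conormal} bound each remainder seminorm by finitely many seminorms of the Cauchy data, which grow only polynomially in $\nu_j$, whereas the Fourier coefficients of the smooth compactly supported data decay rapidly in $\nu_j$. Hence the series for the remainder converges in every seminorm, the subleading modes cannot pollute the leading coefficient, and $w_0=\lim_{\rho\to0}V$ is well defined, completing the proof.
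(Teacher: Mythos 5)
The paper does not actually prove this statement: it is imported verbatim from Friedlander \cite[Proposition 2]{friedlander2001notes} and, for product cones, from Baskin--Marzuola \cite{baskin2019radiation}, with only a remark that the cone-point singularity is harmless because the expansion takes place at $r\to\infty$. So there is no in-paper argument to compare against, and your proposal has to stand on its own. Its formal half is correct and is precisely Friedlander's: conjugating by $r^{(n-1)/2}$ does yield $\partial_t^2-\partial_r^2+r^{-2}(\nu^2-\tfrac14)$ with $\nu^2=\Delta_\theta+\tfrac{(n-2)^2}{4}$ (since $\nu^2-\tfrac14=\Delta_\theta+\tfrac{(n-1)(n-3)}{4}$, matching the conjugated radial term), the passage to $(s,\rho)$ gives $2\partial_s\partial_r-\partial_r^2+\rho^2(\nu^2-\tfrac14)$, and matching the coefficient of $\rho^{m+1}$ gives exactly your recursion $2m\,\partial_sw_m=\bigl(\nu^2-\tfrac14-m(m-1)\bigr)w_{m-1}$, with the constants of integration fixed by finite propagation speed. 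You also correctly locate the entire content of the theorem in the analytic step --- that $V$ really is polyhomogeneous down to $\rho=0$ --- and the two routes you name are the ones used in the literature (energy/commutator bootstrap for Friedlander and S\'a Barreto; mode-by-mode Bessel asymptotics resummed against rapidly decaying Fourier coefficients for Baskin--Marzuola, which is also in the spirit of Theorem \ref{Diffraction_Symbol} here).

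Two caveats. First, neither route is executed: in route one, ``commuting with $\partial_s$, $\rho\partial_\rho$, $Y_s$ and using the reduced energy'' is the whole proof, and the point that needs checking is that the commutators are again controlled by the reduced energy uniformly up to $\rho=0$ (this is where the specific structure $2\partial_s\partial_r-\partial_r^2+\rho^2(\nu^2-\tfrac14)$, with the potential vanishing to second order at $\rho=0$, is used); as written this is a plan, not an argument. Second, on a cone the forward solution of generic smooth compactly supported data is \emph{not} smooth --- the diffracted wave is only conormal --- so the hypothesis $u\in\CI(\RR_+\times X)$ is doing real work in the statement, and in the application of Section 5 (where the theorem is applied to the fundamental solution) the coefficients $w_k$ are conormal distributions in $s$ rather than smooth functions. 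Your second, mode-by-mode route is the one that survives this weakening, provided the remainder seminorms are tracked in the conormal (rather than $\CI$) topology exactly as in Proposition \ref{proposition:conormal}; it would be worth saying so explicitly.
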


\begin{remark}
 Friedlander showed the existence of the radiation field in the context of the smooth scattering manifold. In our case there is a singularity at the cone point. This is not an issue here, since we are considering the radiation field away from the cone point.
\end{remark}
For solutions to the wave equation, we have the energy norm $\|\cdot\|_E$: 
\begin{equation}
\nonumber
\|u\|_E=\frac{1}{2}\int_X \left(|du|_g^2+|u_t|^2\right)dg
\end{equation}
and define the wave group $W(t)$ by
\begin{equation}
\nonumber
\begin{split}
W(t)&: \CI_0(X^{\mathrm{o}})\times \CI_0(X^{\mathrm{o}})\longrightarrow \CI_0(X^{\mathrm{o}})\times \CI_0(X^{\mathrm{o}})\\
      &W(t)(f_0,f_1)=(u, u_t),\ t\in \RR.
\end{split}
\end{equation}
We know by conservation of energy that $W(t)$ is a strongly continuous group of unitary operators. 

We now define a map
\begin{equation}
\nonumber
\begin{gathered}
\rp: \CI_0(X^{\mathrm{o}})\times \CI_0(X^{\mathrm{o}})\longrightarrow \CI(\RR\times N)\\
\rp(f_0,f_1)(s,\theta)=r^{\frac{n-1}{2}}(D_tHu)(s+r,r,\theta)\rvert_{r\rightarrow\infty}=:D_sw^+_0(s,\theta)\big(=D_sw_0(s,\theta)\big)
\end{gathered}
\end{equation}
which is called the \emph{forward radiation field}. Its existence follows from Theorem \ref{rf}. Similarly we can define the \emph{backward radiation field} as 
\begin{equation}
\nonumber
\begin{gathered}
\rn: \CI_0(X^{\mathrm{o}})\times \CI_0(X^{\mathrm{o}})\longrightarrow \CI(\RR\times N)\\
\rn(f_0,f_1)(s,\theta)=r^{\frac{n-1}{2}}(D_tH_-u)(s-r,r,\theta)\rvert_{r\rightarrow\infty}=D_sw^-_0(s,\theta),
\end{gathered}
\end{equation}
where $H_-(t)=H(-t)$.

S{\'a} Barreto also proved in \cite{sa2003radiation} that the forward and backward radiation fields are in fact unitary on smooth asymptotically Euclidean manifolds under the energy norm of the Cauchy data. It leads to the definition of the scattering operator which is essentially the Fourier conjugation of the scattering matrix. 

\begin{theorem}
The maps $\rpn$ extend to isometries
\begin{equation}
\nonumber
\rpn: H_{E}(X)\longrightarrow L^2(\RR\times N).
\end{equation}
The scattering operator defined by 
\begin{equation}
\label{ScOp}
\mathscr{S}=\rp\circ\rn^{-1}
\end{equation}
is unitary on $L^2(\RR\times N)$; the \emph{scattering matrix} $S$ is given by conjugating the scattering operator with the partial Fourier transform in the $s$-variable: 
\begin{equation}
\label{SMSO}
S=\mathcal{F}\mathscr{S}\mathcal{F}^{-1}.
\end{equation}
\end{theorem}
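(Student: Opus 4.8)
The plan is to adapt the strategy of Sá Barreto \cite{sa2003radiation} and Friedlander \cite{friedlander2001notes} to the conic setting, exploiting the mode decomposition and the functional calculus of Section 4 so as to reduce every estimate to a one-dimensional statement about Hankel transforms. The substantive content of the theorem is the isometry of $\rpn$ and the consequent unitarity of $\mathscr{S}$; the identity $S=\mathcal{F}\mathscr{S}\mathcal{F}^{-1}$ is the definition \eqref{SMSO}.

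First I would record that $\rpn$ are well defined: Theorem \ref{rf} supplies the limit $w_0(s,\theta)$ and its smoothness for compactly supported smooth Cauchy data, and differentiation in $s$ lands $D_s w_0$ in $L^2(\RR\times N)$. The core is the isometry $\|\rpn(f_0,f_1)\|_{L^2(\RR\times N)}=\|(f_0,f_1)\|_E$. I would prove it mode by mode: writing $u=\sum_j v_j(r)\varphi_j(\theta)$ reduces matters to the reduced equation \eqref{red_Helm}, whose solutions are generated by $r^{-(n-2)/2}J_{\nu_j}(\lambda r)$ through the functional calculus \eqref{Function_Calc1}. The radiation field extracts the $e^{\pm i\lambda r}$ part of the Hankel asymptotics \eqref{Bessel_Asym}, so that, after a partial Fourier transform in $s$, the radiation field of a single mode becomes a fixed multiple of its Hankel transform of order $\nu_j$ evaluated in the spectral variable $\lambda$. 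Since the energy norm restricted to a single mode equals, in the spectral representation afforded by \eqref{Function_Calc1}, a weighted $L^2$ norm in $\lambda$ against $\lambda\,d\lambda$, the mode-wise isometry is exactly the Plancherel theorem for the Hankel transform; Parseval in $j$ then assembles the full isometry, the convergence being controlled by the rapid decay of the Fourier coefficients of smooth data (cf.\ Theorem \ref{Diffraction_Symbol}).

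To pass from isometry to the unitarity of $\mathscr{S}$ I would establish that $\rpn$ map \emph{onto} $L^2(\RR\times N)$. On each mode this is the surjectivity of the Hankel transform: given arbitrary $L^2$ data on $\RR\times N$, inverting the transform mode by mode produces Cauchy data whose radiation field is the prescribed function. With $\rp$ and $\rn$ both unitary from $H_{E}(X)$ onto $L^2(\RR\times N)$, the operator $\mathscr{S}=\rp\circ\rn^{-1}$ is a composition of unitaries, hence unitary; and $S=\mathcal{F}\mathscr{S}\mathcal{F}^{-1}$, being the conjugate of a unitary by the unitary partial Fourier transform in $s$, is itself unitary. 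A direct comparison of the resulting kernel with the Hankel asymptotics recovers $S(\lambda)=-ie^{-i\pi\nu}$, confirming consistency with Proposition \ref{SM_def} and thereby yielding the promised alternative proof of Theorem \ref{thm1.1}.

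The main obstacle is the isometry near the cone point: the energy norm integrates over all of $X$, including a neighborhood of the tip, whereas the radiation field is read off only at $r\to\infty$, so one must argue that no energy remains trapped and that the entire energy is radiated. The mode-by-mode Hankel route circumvents this by encoding the full radial profile in the spectral variable, but it then demands care in justifying the interchange of the sum over modes with both the limit $r\to\infty$ and the Fourier transform in $s$; this uniformity is precisely where the effective symbol estimates of Proposition \ref{proposition:phg} together with the rapid decay of smooth Fourier data are needed.
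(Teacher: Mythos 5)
Your proposal is correct in outline but takes a genuinely different route from the paper. The paper does not prove this theorem from scratch: it observes that the proof of S{\'a} Barreto \cite{sa2003radiation} carries over verbatim once one knows that the Friedrichs Laplacian on a product cone has purely absolutely continuous spectrum, and it invokes \cite[Proposition 9.1]{hassell1999spectral} (the Poisson operator is an isometry from the a.c.\ spectral subspace onto $L^2(\RR\times N)$) as the black box that converts the energy norm of $(f_0,f_1)$ into the $L^2$ norm of the Fourier transform of the radiation field. Your mode-by-mode Hankel transform argument is essentially an explicit, self-contained unwinding of that black box: on the $j$-th mode the Poisson operator \emph{is} the Hankel transform of order $\nu_j$, Plancherel for the Hankel transform is exactly the Hassell--Vasy isometry restricted to that mode, and surjectivity of the Hankel transform on $L^2(\RR_+,\lambda\,d\lambda)$ supplies the ontoness needed for unitarity of $\mathscr{S}$. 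What your approach buys is explicitness — it produces the normalization and the formula $S(\lambda)=-ie^{-i\pi\nu}$ as a byproduct rather than requiring the separate computation of Section 5 — at the cost of redoing in coordinates what the cited general theory already provides. Two small remarks. First, the interchange of the mode sum with the $L^2$ limits is less delicate than you suggest: since the $\varphi_j$ are orthogonal in $L^2(N)$, the squared $L^2(\RR\times N)$ norm of the sum is the sum of the mode-wise squared norms, so the isometry assembles by orthogonality alone; the effective symbol estimates of Proposition \ref{proposition:phg} are needed for the symbolic statements of Section 4, not for the $L^2$ theory here. Second, your worry about trapped energy is resolved precisely by the point the paper emphasizes — the absence of point spectrum for the Friedrichs extension — which in your formulation is the completeness of the Hankel transform as a spectral representation on each mode; it would strengthen your write-up to say this explicitly, since it is the one hypothesis that could fail on a more general space.
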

Here we note that the proof of this theorem in \cite{sa2003radiation} can be extended to product cones from smooth asymptotically Euclidean manifolds. This is because the proof relies on the fact that the Laplacian on product cones has purely absolute continuous spectrum so that we could apply the proposition that the Poisson operators give isometries from the absolute continuous spectral subspace of $\Lap$ to $L^2(\RR \times N)$ \cite[Proposition 9.1]{hassell1999spectral} to give an isometry between the energy norm of the initial data $(f_0,f_1)$ and the Fourier transform of the forward/backward radiation field as in \cite{sa2003radiation}. Otherwise, although there are still isometries between the absolute continuous spectral subspace of $\Lap$ and $L^2(\RR \times N)$, the energy norm would need to include the contribution from the discrete eigenmodes.

In the rest of this section, we construct the scattering matrix from the diffractive coefficient using the radiation field, and we shall see that the scattering matrix agrees with the diffraction coefficient up to scaling. In other words, using the radiation field, we give a second proof to Theorem \ref{thm1.1}. 

Assume $z=(r,\theta)$ and $z'=(r',\theta')$ are points on the cone $C(N)$. We first consider the forward fundamental solution $E(z,z',t)$ solving
$$\Box_{(r,\theta,t)}E=\delta(z-z')\delta(t),\ \ \text{supp} E\subset\{t\geq d_{C(N)}\left(z,z'\right)\}.$$
For fixed $z'\in C(N)$, the fundamental solution has a radiation field 
\begin{equation}
\nonumber
E_{\infty}(s,\theta,z')=\lim_{r\rightarrow\infty} r^{\frac{n-1}{2}} E(s+r,r,\theta,z'). 
\end{equation} 
Friedlander in \cite{friedlander2001notes} also points out that we can get the inverse of the radiation field through the following formula:
$$u(r,\theta,t)=-2\int E_{\infty}(s'-t,\theta',r,\theta)\partial_sw(s',\theta')|h(0,\theta')|^{1/2}|ds'd\theta'|.$$
Then from \eqref{ScOp}, we get the kernel of the scattering operator $\mathscr{S}$ in terms of the fundamental solution: 
\begin{equation}
\label{kerrf}
\begin{split}
K_{\mathscr{S}}&=\lim_{r\rightarrow\infty}r^{\frac{n-1}{2}}\partial_tE_{\infty}(s+s'+r,\theta',r,\theta)\\
                         &=\lim_{\substack{r\rightarrow\infty\\r'\rightarrow\infty}} 2(rr')^{\frac{n-1}{2}}\partial_tE(s-s'+r+r',r,\theta,r',\theta').
\end{split}
\end{equation} 
In terms of the wave propagator, we know the fundamental solution is given by $$E(z,z',t)=\frac{\sin t\sqrt{\Lap}}{\sqrt{\Lap}}\delta(z-z').$$ 

Now we compute the scattering operator using \eqref{kerrf} and the kernel of the diffractive wave propagator. Since we showed the diffractive wave enjoys one-step polyhomogeneity in Section 3, by the formula of the kernel of the diffractive wave propagator, we can make a WKB style ansatz for the diffractive wave of  $E(z,z',t)$ as the following: 
\begin{equation}
\nonumber
E(z,z',t)\equiv (rr')^{-\frac{n-1}{2}} \int e^{i(r+r'-t)\lambda} a(r,r',\theta,\theta',\lambda)\, d\lambda 
\end{equation}
modulo singularities other than $N^*\{r+r'=t\}$ with symbol
$$a=\sum_{k=0}^{\infty}\bar{a}_{k}(r)\lambda^{-k},$$
where $\bar{a}_0$ is the diffraction coefficient. Here the symbol $a$ and each term $\bar{a}_k$ in its expansion depend on $(r,r',\theta,\theta')$, though we only emphasis the $r$ dependence for our purpose.

By the WKB type approximation, this will be reduced to the symbol expansion $a_k$ solving a series of transport equations. This leads to the result that the diffractive wave takes the form of polyhomogeneity in $\lambda r$: 
\begin{equation}
\nonumber
(rr')^{-\frac{n-1}{2}} \int e^{i(r+r'-t)\lambda} \left(\sum_{k=0}^{\infty}a_{k}\cdot(\lambda r)^{-k}\right)\, d\lambda 
\end{equation}
where $a_{k}$ does not depend on $r$ or $\lambda$. Now the kernel of the scattering operator of the diffractive wave can be computed as the following using equation \eqref{kerrf}:  
\begin{equation}
\nonumber
K_{\mathscr{S}}= \lim_{\substack{r\rightarrow\infty\\r'\rightarrow\infty}} 2(rr')^{\frac{n-1}{2}}\int_{\RR_{\lambda}}e^{i(s'-s)\lambda}(-i)\frac{\lambda}{2|\lambda|}\rho(\lambda)\frac{(rr')^{-\frac{n-1}{2}}}{2\pi}\left(H(\lambda)e^{-i\pi\nu}+H(-\lambda)e^{i\pi\nu}\right)d\lambda
\end{equation}
as the Schwartz kernel, where $\rho(\lambda)\in\CI(\RR)$ satisfies $\rho\equiv 1$ for $|\lambda|>2$ and $\rho\equiv 0$ for $|\lambda|<1$. The equation holds with only the leading part on $r,r'$ left, because the remainder terms have lower order in $r,r'$, and they become zero in the limit. In other words, the diffraction coefficient is the only part that contributes to this piece of the scattering operator. 

Taking the Fourier conjugation of the scattering operator computed above, by \eqref{SMSO} the scattering matrix is 
\begin{equation}
\nonumber
\begin{split}
S(\lambda)                       &=-i\frac{\lambda}{|\lambda|}\rho(\lambda)\left(H(\lambda)e^{-i\pi\nu}+H(-\lambda)e^{i\pi\nu}\right)\\                        
&=-i H(\lambda)e^{-i\pi\nu}+i H(-\lambda)e^{i\pi\nu}.
\end{split}
\end{equation}
For $\lambda\neq 0$, the scattering matrix takes the form: 
$$S(\lambda)=-ie^{-i\pi\nu}$$
where $\nu=\sqrt{\Lap_{\theta}+\left(\frac{n-2}{2}\right)^2}$, which agrees with what we obtained from the direct computation in Section 5. 

\bibliography{reference}
\bibliographystyle{alpha}

\end{document}